\newtheorem{theorem}{Theorem}[section]
\newtheorem{definition}{Definition}[section]
\newtheorem{corollary}{Corollary}[section]
\newcommand{\dblarrow}[3][1.5em]{\,
  \makebox[0pt][l]{\hspace{3pt}\makebox[#1][c]{\raisebox{8pt}{$#2$}}}%
  \makebox[0pt][l]{\raisebox{2pt}{$\xrightarrow{\hspace{#1}}$}}%
  \makebox[0pt][l]{\hspace{3pt}\makebox[#1][c]{\raisebox{-8pt}{$#3$}}}%
  \raisebox{-2pt}{$\xleftarrow{\hspace{#1}}$}\,}
\newcommand{\alpham}{\alpha_\mathrm{m}}
\newcommand{\alphap}{\alpha_\mathrm{p}}
\newcommand{\Asd}{A_{\mathrm{sd}}}
\renewcommand{\b}[1]{\boldsymbol{#1}}
\newcommand{\cF}{\mathcal{F}}
\newcommand{\cG}{\mathcal{G}}
\newcommand{\ddt}{\frac{\mathrm{d}}{\mathrm{d}t}}
\newcommand{\dr}{\dx[r]}
\newcommand{\ds}{\dx[s]}
\newcommand{\dx}[1][x]{\,\mathrm{d}#1}
\newcommand{\gc}{g_\mathrm{c}}
\newcommand{\mum}{\mu_\mathrm{m}}
\newcommand{\mup}{\mu_\mathrm{p}}
\newcommand{\Nchs}{{n_\mathrm{x}}}
\newcommand{\Nf}{n_\mathrm{f}}
\newcommand{\Ps}{P_{\mathrm{s}}}
\newcommand{\Psd}{P_{\mathrm{sd}}}
\newcommand{\R}{\mathbb{R}}
\newcommand{\ttg}[1]{\ensuremath{(\ref{#1})}}
\newcommand{\tautr}{\tau_{\mathrm{tr}}}
\newcommand{\wcF}{\widetilde{\mathcal{F}}}
\newcommand{\wcG}{\widetilde{\mathcal{G}}}
\newcommand{\whm}{{\widehat{m}}}
\newcommand{\whp}{{\hat{p}}}
\newcommand{\wtm}{{\widetilde{m}}}
\newcommand{\wtp}{{\tilde{p}}}
\title{Reduction of chemical systems by delayed quasi-steady state assumptions}
\author{Tom\'a\v s Vejchodsk\'y$^{\mathrm{a,b,}}$\thanks{Corresponding author. {\it Email address}: \texttt{vejchod@math.cas.cz}}, Radek Erban$^\mathrm{a}$, Philip K. Maini$^\mathrm{a}$}
\date{}
\begin{document}
\maketitle

\begin{center}
\vspace{-24pt}
{\small 
$^\mathrm{a}$Wolfson Centre for Mathematical Biology, Mathematical Institute, University of Oxford, Andrew Wiles Building, Radcliffe Observatory Quarter, Woodstock Road, Oxford, OX2 6GG, United Kingdom
\\
$^\mathrm{b}$Permanent address: Institute of Mathematics, Academy of Sciences, \v{Z}itn\'a 25, Praha~1, CZ-115\,67, Czech Republic
}
\end{center}

\begin{abstract}
Mathematical analysis of mass action models of large complex chemical systems
is typically only possible if the models are reduced.
The most common reduction technique is based on
quasi-steady state assumptions.
To increase the accuracy of this technique we propose
\emph{delayed} quasi-steady state assumptions (D-QSSA) which
yield systems of delay differential equations.
We define the approximation based on D-QSSA, prove the corresponding
error estimate, and show how it approximates the invariant manifold.
Then we define a class of well mixed chemical systems and formulate
assumptions enabling the application of D-QSSA.
We also apply the D-QSSA to a model of Hes1 expression 
and to a cell-cycle model to illustrate
the improved accuracy of the D-QSSA with respect to the standard
quasi-steady state assumptions.
\end{abstract}

{\small
\noindent{\bf Keywords.}
%\texttt{elsarticle.cls}\sep \LaTeX\sep Elsevier \sep template
%\MSC[2010] 00-01\sep  99-00
Chemical dynamics, mass action, system reduction, ordinary differential equations, delay differential equations, error estimate.
}

\section{Introduction}

%WHY MODEL REDUCTION IS IMPORTANT:

The dynamic behaviour of complex chemical and biochemical systems can be
analysed by the mathematical tools of bifurcation analysis.
%and modelled by numerical methods.
However, in certain situations the size of the problem
can make these tools impractical or even impossible to use.
In these cases, we may try to reduce the system to make it amenable to analysis
while conserving its dynamic behaviour.

There are already many ideas and methods for model reduction of chemical systems
described in the literature. For example,
a computational singular perturbation reduction method for chemical kinetics
with slow and fast variables was developed in \cite{Lam:1993}
and its recent analysis presented in \cite{ZagKapKap:2004}.
The method of invariant manifold is presented in \cite{Fraser:1988,GorKar:2003,Nguyen:1989,RouFra:1991}.
A global approach to model reduction based on the concept of minimal entropy production and its numerical implementation can be found in \cite{Lebiedz:2004}.
A model reduction technique for multiscale biochemical networks is described
in \cite{RadGorZinLil:2008}.
Model reductions based on quasi-steady state assumptions and variable lumping
are analysed from the point of view of control theory in \cite{LeiHanTuz:2002}.
A method for approximation of the slow manifold of a complex system in cases
when a direct approximation is not possible is presented in \cite{GeaKev:2005}.
A collection of methods for analytical derivation and numerical computation
of the slow invariant manifolds can be found in \cite{GorKarZin:2004}.
Finally, the paper \cite{OkiMav:1998} reviews three general strategies for model reduction of chemical systems: lumping, sensitivity analysis, and time-scale analysis.

Nevertheless, the idea of using time delays for model reduction
of chemical systems has, to our knowledge, not been explored.
Of course, various models of chemical kinetics with time delays exist.
For example, the law of mass action is extended in \cite{Roussel:1996} to allow
for delayed effects.
Delay differential equations are used in \cite{Monk:2003} to model transcriptional delay.
A stochastic algorithm with delays is presented in \cite{Bratsun:2005,RouZhu:2006}. %\cite{GibBru:2000} IS SECOND CITATION OK? NO - ABOUT THE NEXT REACTION METHOD - NO DELAYS - NO REDUCTION.
However, a systematic approach for model reduction based on delays does not appear to exist.

%HOW THE MODEL REDUCTION IS DONE BY STANDARD QSSA AND REVIEW OF THE HISTORY

In the context of mass action models of chemical systems, the standard tool
for model reduction is the quasi-steady state assumption (QSSA).
Mathematically, mass action models are systems of coupled ordinary differential
equations (ODEs) for a number of variables. Based on practical knowledge
of the chemical system, these variables can, in some cases, be split into fast
and slow variables. The application of the QSSA then
replaces the ODEs for the fast variables by algebraic equations,
allowing them to be expressed
%Consequently, this enables the fast variables to be written
algebraically in terms of the slow variables,
and the original system of ODEs can thus be reduced to a system for
only the slow variables.

The QSSA is used extensively.
The first application of the QSSA to chemical kinetic systems dates back to 1913
\cite{Bodenstain:1913,UndCha:1913}.
The QSSA has been analysed many times, see
the review \cite{SegSle:1989} and references therein. 
One of the best known models in this context is the Michaelis--Menten kinetics \cite{MicMen:1913}. 
Probably the first application of the QSSA to this model appeared in \cite{BriHal:1925}.
A suitable change of variables can enhance the quality of the QSSA approximation,
see for example the total QSSA approach \cite{BorBoeSeg:1996}.
The error of the QSSA approximation is analysed in \cite{TurTomPil:1993},
where ODEs for the error are derived.

% The QSSA was first applied in 1913 to chemical kinetic schemes
% by Bodenstein [Bodenstein, M.: Z. Phys. Chem. (Zeitschrift für physikalische Chemie) 1913, 85, 329--???.]
% and by Chapman and Underhill [Underhill, L. K.; Chapman, D. L.: The interaction of chlorine and hydrogen. The influence of mass., J. Chem. Soc. Trans. (Journal of the Chemical Society, Transactions) 1913, 103, 496--508].

%MAIN IDEA OF THE D-QSSA
Studying the error of the QSSA approximation, we note that in the
original system the fast variables
always need a certain amount of time to reach their quasi-steady states.
Therefore, if the quasi-steady state changes (due to change in the slow variables), the corresponding fast variable will reach the new value of the
quasi-steady state with a certain time delay.
On the other hand, if the original system is reduced by the QSSA then the fast variables
stay in their quasi-steady states and the time delay is neglected.
This discrepancy between the original and reduced systems can be naturally
decreased by introducing time delays to the QSSA.
This new approach is called the \emph{delayed quasi-steady state assumption} (D-QSSA).

% EXPLAIN FINITE DIMENSIONALITY
The system reduced by the D-QSSA has the form of delay differential equations. These are often considered as infinite-dimensional systems, because they are initialized by a function corresponding to the history of the variables in the system. 
Consequently, the space of initial histories and the state space are infinite dimensional, in general. 
On the face of it, the approximation of a finite dimensional system
(of ODEs) by an infinite dimensional system (of delay differential
equations) does not look like a system reduction.
%The approximation of a finite dimensional system of ODEs by an infinite dimensional system of delay differential equation can hardly be considered as system reduction.
However, in the D-QSSA we initialize the history by a simple constant prolongation of the initial condition of the original ODEs. Therefore, the initial space has a finite dimension and the D-QSSA reduces the state space in the same way as the QSSA.

%Recently, one of the authors has applied the idea of D-QSSA
The idea of D-QSSA has been recently applied
to a particular biochemical system
modelling circadian rhythms \cite{Vej:MB} to illustrate the accuracy of the D-QSSA
approximation. This has been the first attempt to use
the D-QSSA with a specific application and no analysis.
However, the D-QSSA can be defined for a general class of problems and
its error can be rigorously analysed. Therefore, we present below a general
definition of the D-QSSA and its first error estimate.

%OUTLINE OF THE PAPER
In Section~\ref{se:D-QSSA} we define the D-QSSA for a general linear ODE and
we formulate and prove the corresponding error estimate.
This is an upper bound for instantaneous error of the D-QSSA approximation.
It implies that in a special case the error decreases exponentially towards
zero as the system evolves in time.
Section~\ref{se:system} assumes a system of two ODEs and shows how its invariant
manifold is approximated by the D-QSSA. 
Section~\ref{se:massaction} considers a general chemical system
and its corresponding mass action ODE model. We explicitly show how to apply the D-QSSA
to such systems and we rigorously state the necessary assumptions.
In Sections~\ref{se:numex} and \ref{se:cellcyc}
we apply both the QSSA and D-QSSA to a model of
expression of Hes1 protein and to a cell-cycle model.
We compare the accuracy of both approximations 
and numerically illustrate various aspects of the D-QSSA.
Finally, Section~\ref{se:concl} discusses the results and draws conclusions.

\section{Delayed quasi-steady state assumption and error estimates}
\label{se:D-QSSA}

% TO DO: REORGANIZATION:
% 1. DEFINE D-QSSA FOR EQUATION \ttg{eq:massaction3}, ASSUME $g(t) > 0$.
% 2. PROVE THE ERROR ESTIMATE
% 3. SHOW THAT GENARAL MASS-ACTION SYSTEMS CAN BE TRANSFORMED (UNDER SOME ASSUMPTIONS) TO THE FORM \ttg{eq:massaction3}.

The D-QSSA can be applied to an ordinary differential equation of the form
\begin{equation}
\label{eq:ode1}
  \ddt x(t) = f(t) - g(t)x(t), \quad\textrm{for } t \in (0,T),
\end{equation}
where $T>0$ and $g(t)$ is positive in $(0,T)$.
The D-QSSA approximation is defined as follows.
\begin{definition}
\label{de:D-QSSA}
  The delayed quasi-steady state approximation $\tilde x(t)$
  to the solution $x(t)$ of equation \ttg{eq:ode1} is given by
\begin{equation}
  \label{eq:D-QSSA}
  \tilde x(t) = \frac{f(t-\tau(t))}{g(t-\tau(t))},
  \quad\textrm{where } %S_j(t) = \frac{f_j(t-\tau_j(t))}{g_j(t-\tau_j(t))}
  %\text{ and }
  \tau(t) = \frac{1}{g(t)}.
\end{equation}
\end{definition}
Let us emphasize that $\tilde x(t)$ is designed to approximate
the long-time behaviour of $x(t)$.
If $t$ is close to zero then approximation \ttg{eq:D-QSSA} is still defined,
but the quantity $t-\tau(t)$ can be negative.
Therefore, technically, the functions $f$ and $g$ have to be defined for negative values
of $t$ as well. Consequently, for small values of $t$, the approximation $\tilde x(t)$
depends on arbitrary extensions of $f$ and $g$ to negative values. Thus, we cannot
expect good approximation qualities of $\tilde x(t)$ for $t$ close to zero.
This is in agreement with the properties of the standard QSSA
and with the error estimate presented below.

%TO DO: DERIVATION OF THE D-QSSA BASED ON NUMERICAL QUADRATURE.
The D-QSSA approximation can be derived in the case of constant $g(t)$ from the following expression for the solution of \ttg{eq:ode1}
with initial condition $x(0)=x^0$:
$$
  x(t) = x^0 \exp[-t\gc] + \int_0^t f(s) \exp[(s-t)\gc] \ds,
$$
where $\gc>0$ stands for the constant value of $g(t)$. The integral in this expression can be
approximated by a one-node quadrature rule
\begin{equation}
  \label{eq:qr}
  \int_0^t f(s) \exp[(s-t)\gc] \ds \approx w f(t-\tau),
\end{equation}
where the quadrature node $t-\tau$ and the quadrature weight $w$ are to be
determined such that this quadrature rule is exact for all \emph{linear functions} $f$.
This can be seen as a quadrature with weighting function $\omega(s) = \exp[(s-t)\gc]$.
Notice that $\omega(t) = 1$, and as $s$ goes from $t$ towards $0$, values $\omega(s)$
decrease exponentially fast towards zero. Thus, the value of integral \ttg{eq:qr} is
influenced mainly by values of $f$ close to $t$. The values of $f$ far from $t$ have
little influence on the value of the integral. Therefore, the simple one-node
quadrature rule provides reasonable accuracy in many cases.
Further, let us note that for $t < \tau$ the approximation $w f(t-\tau)$ depends on values of $f$ outside $(0,t)$. This corresponds to the poor approximation qualities of $\tilde x(t)$ for small values of $t$.

As we mentioned above, quantities $\tau$ and $w$ can be explicitly determined
such that quadrature rule \ttg{eq:qr} is exact for all \emph{linear
functions} $f$. The expressions for $\tau$ and $w$ are complicated, but if we
neglect all terms that decline to zero exponentially with $t$, we obtain
$\tau = 1/\gc$ and $w=1/\gc$. Consequently, the approximation $x(t)\approx w f(t-\tau)$
coincides with \ttg{eq:D-QSSA}.
For the explicit formulae for $\tau$ and $w$ and for more details about
this heuristic derivation, we refer to \cite{Vej:MB}.

Let us note that using a one-node quadrature rule with the node at $t$,
$$
  \int_0^t f(s) \exp[(s-t)\gc] \ds \approx w_0 f(t),
$$
we can determine the weight $w_0$ such that the rule is
exact for all \emph{constant functions} $f$. Neglecting the exponentially
decaying terms, we obtain $w_0 = 1/\gc$ and, thus, the standard QSSA approximation
$x(t) \approx f(t)/\gc$.

To analyse the accuracy of the D-QSSA approximation \ttg{eq:D-QSSA},
we present the following error estimate.
It is an estimate of the difference between
the solution $x(t)$ of equation \ttg{eq:ode1} and its D-QSSA approximation
$\tilde x(t)$ given by \ttg{eq:D-QSSA}. %Definition~\ref{de:D-QSSA}.

\begin{theorem}
\label{th:errest}
Let $T>0$ and $0 < \varepsilon \leq M$ be fixed constants,
$f \in C^2([-1/\varepsilon,T])$, and $g\in C([-1/\varepsilon,T])$.
Let $0 < \varepsilon \leq g(t) \leq M$ for all $t\in[-1/\varepsilon,T]$.
Further, let $x(t)\in C^1([0,T])$ be the solution of the ODE \ttg{eq:ode1}
% \begin{equation}
% \label{eq:ode}
%   \ddt x(t) = f(t) - g(t) x(t) \quad \forall t\in(0,T)
% \end{equation}
with initial condition $x(0) = x^0$
and let $\tilde x(t)$ be given by \ttg{eq:D-QSSA}.
% \begin{equation}
% \label{eq:deftx}
%   \tilde x(t) = \frac{f( t - \tau(t))}{g(t-\tau(t))},
%   \quad \text{with } \tau(t) = \frac{1}{g(t)}.
% \end{equation}
Then
\begin{equation}
\label{eq:errest}
  | x(t) - \tilde x(t) | \leq %x^0 \exp[ -\varepsilon t ]
    2 \left( \frac{1}{\varepsilon} - \frac{1}{M} \right) \max\limits_{[-1/\varepsilon,t]} |f|
      %\left( \max\limits_{[0,t]} g^{-1} - \min\limits_{[0,t]} g^{-1} \right)
    +
    \frac{1}{\varepsilon^3} \max\limits_{[-1/\varepsilon,t]} |f''|  + Q(t)
\end{equation}
for all $t\in [0,T]$, where the prime denotes the derivative and
$$
  Q(t) = \left[ |x^0|
    + \frac{1}{\varepsilon} \max\limits_{[-1/\varepsilon,t]} |f|
    + \frac{t}{\varepsilon} \max\limits_{[-1/\varepsilon,t]} |f'|
    + \frac{1}{2\varepsilon} \left( \frac{1}{\varepsilon^2} + t^2 \right) \max\limits_{[-1/\varepsilon,t]} |f''|
    \right] \exp(-\varepsilon t).
$$
\end{theorem}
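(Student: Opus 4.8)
The plan is to exploit the linearity of \ttg{eq:ode1}: its solution has the integrating-factor representation
\begin{equation*}
  x(t) = x^0 \exp[-G(t)] + \int_0^t f(s)\,\exp[-(G(t)-G(s))]\ds, \qquad G(t) = \int_0^t g(u)\dx[u].
\end{equation*}
The initial-condition term is controlled immediately, since $G(t)\ge \varepsilon t$ gives $|x^0\exp[-G(t)]|\le |x^0|\exp(-\varepsilon t)$, which is the first contribution to $Q(t)$. Everything then reduces to comparing the integral above with $\tilde x(t)=f(t-\tau)/g(t-\tau)$, where $\tau = 1/g(t)\in[1/M,1/\varepsilon]$. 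Note that $\tau\le 1/\varepsilon$ guarantees $t-\tau\ge -1/\varepsilon$, so $f$ and $g$ (and the Taylor node $\xi_s$ below) are evaluated inside $[-1/\varepsilon,T]$; this is exactly why the hypotheses and the maxima in \ttg{eq:errest} extend down to $-1/\varepsilon$.

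The difficulty is that $g$ is only assumed continuous, so the clean routes — substituting $f=g\cdot(f/g)$ and integrating by parts, or inserting $\tilde x$ into the ODE and estimating the residual — are both unavailable, since each differentiates $g$. This is precisely why the estimate measures the variability of $g$ only through the crude quantity $1/\varepsilon-1/M$. Accordingly I would first \emph{freeze} the kernel: replace $\exp[-(G(t)-G(s))]$ by the constant-rate kernel $\exp[-g(t)(t-s)]$. Both kernels lie between $\exp[-M(t-s)]$ and $\exp[-\varepsilon(t-s)]$, so their difference is dominated by $\exp[-\varepsilon(t-s)]-\exp[-M(t-s)]$, whose integral over $(0,t)$ is at most $1/\varepsilon-1/M$. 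This produces the first of the two $(1/\varepsilon-1/M)\max|f|$ terms.

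For the frozen integral $\int_0^t f(s)\exp[-g(t)(t-s)]\ds$ I would Taylor-expand $f$ (which, unlike $g$, is $C^2$) about the delayed node $t-\tau$, writing $f(s)=f(t-\tau)+f'(t-\tau)(s-t+\tau)+\tfrac12 f''(\xi_s)(s-t+\tau)^2$, and then integrate term by term; the three weighted moments of the constant-rate kernel are elementary. The zeroth moment yields $f(t-\tau)(1-\exp[-g(t)t])/g(t)$; comparing its leading part with the required weight $1/g(t-\tau)$ and using $|1/g(t)-1/g(t-\tau)|\le 1/\varepsilon-1/M$ gives the second $(1/\varepsilon-1/M)\max|f|$ term, while the $\exp[-g(t)t]$ part contributes the $\tfrac1\varepsilon\max|f|\exp(-\varepsilon t)$ piece of $Q(t)$. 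The crucial point is the first moment: because the node sits at $t-\tau$ with $\tau=1/g(t)$, the one-node rule is exact for linear $f$ up to exponentially small terms, and indeed the first moment equals $\tfrac{t}{g(t)}\exp[-g(t)t]$ exactly — it is \emph{purely transient}, contributing only the $\tfrac{t}\varepsilon\max|f'|\exp(-\varepsilon t)$ term of $Q(t)$ and no non-decaying $f'$ term. The second moment is of order $g(t)^{-3}$ and, after bounding $g(t)\ge\varepsilon$, gives the non-decaying $\tfrac1{\varepsilon^3}\max|f''|$ term together with the remaining $f''$ contribution to $Q(t)$.

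I expect this first moment to be the crux. With the true variable kernel the analogous first moment is \emph{not} purely transient — it retains a non-decaying part comparable to $1/\varepsilon-1/M$, which, multiplied by $f'$, could not be absorbed into the stated bound, since there is no non-decaying $\max|f'|$ term and $\max|f'|$ cannot be dominated by $\max|f|$ and $\max|f''|$ in the required form. Freezing the kernel \emph{before} expanding is what makes this term collapse to a transient, and arranging that collapse to occur exactly at the delayed node $t-\tau$ is the heart of the argument. The final step is bookkeeping: bound every $g(t)$ below by $\varepsilon$, every $\exp[-g(t)t]$ above by $\exp(-\varepsilon t)$, and $\tau$ above by $1/\varepsilon$, and collect the four transient contributions into $Q(t)$.
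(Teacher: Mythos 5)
Your proposal is correct and follows essentially the same route as the paper's proof: the integrating-factor representation, freezing the kernel at rate $g(t)$ with the $\left(\frac{1}{\varepsilon}-\frac{1}{M}\right)\max|f|$ comparison, Taylor-expanding $f$ about the delayed node $t-\tau$ so the first moment collapses to the purely transient $\frac{t}{g(t)}\exp[-g(t)t]$, and collecting all transients into $Q(t)$. The only cosmetic difference is that the paper introduces the intermediate quantity $\hat x(t)=f(t-\tau)/g(t)$ and splits the error upfront, whereas you absorb that same $|1/g(t)-1/g(t-\tau)|\leq \frac{1}{\varepsilon}-\frac{1}{M}$ comparison into the zeroth-moment computation.
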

\begin{proof}
  Let $t \in [0,T]$ be fixed. Without loss of generality we set
$\varepsilon = \min_{[-1/\varepsilon,t]} g$ and $M = \max_{[-1/\varepsilon,t]} g$.

First, we split the error $x(t) - \tilde x(t)$ as follows:
\begin{equation}
\label{eq:xtx}
  |x(t) - \tilde x(t)| \leq |x(t) - \hat x(t)| + |\hat x(t) - \tilde x(t)|
\end{equation}
with
$$
  \hat x(t) = \frac{f(t - \tau(t))}{g(t)}.
$$
Note that $\tau(t) \leq 1/\varepsilon$ and, hence,
the second term in this splitting can be easily bounded as
\begin{equation}
\label{eq:hxtx}
 |\hat x(t) - \tilde x(t)| \leq
   \left( \frac{1}{\varepsilon} - \frac{1}{M}  \right) \max\limits_{[-1/\varepsilon,t]} |f|.
   %   \left( \max\limits_{[0,t]} g^{-1} - \min\limits_{[0,t]} g^{-1}  \right).
\end{equation}

Since the solution of \ttg{eq:ode1} with initial condition $x(0)=x^0$
can be expressed as
$$
  x(t) = x^0 \exp\left[ - G(t) \right]
       + \int_0^t f(s) \exp\left[ G(s)-G(t) \right] \ds,
$$
where $G(t) = \int_0^t g(r) \dr$,
we can estimate
\begin{multline}
\label{eq:xhx}
  |x(t) - \hat x(t)| \leq
    \left| x^0 \exp\left[ -G(t) \right] \right|
  + \left| \int_0^t f(s) \left( \exp[ G(s) - G(t) ] - \exp[ g(t)(s-t) ] \right) \ds \right|
  \\
  + \left| \int_0^t f(s) \exp[ g(t)(s-t) ] \ds - \hat x(t) \right|.
\end{multline}
Clearly, we can bound the first term on the right-hand side of
\ttg{eq:xhx} as
\begin{equation}
\label{eq:x0est}
  \left| x^0 \exp\left[ -G(t) \right] \right| \leq
    |x^0| \exp[ -\varepsilon t ].
\end{equation}
To bound the second term, we consider
\begin{multline*}
  Z(s) = \exp[ G(s) - G(t) ] - \exp[ g(t)(s-t) ]
    = \exp\left[ -\int_s^t g(r) \dr \right] - \exp[ g(t)(s-t) ]
\\
    \leq \exp[ -\varepsilon(t-s) ] - \exp[ -g(t)(t-s) ].
\end{multline*}
Hence,
\begin{multline*}
  \int_0^t Z(s) \ds \leq
  \int_0^t \exp[ -\varepsilon(t-s) ] - \exp[ -g(t)(t-s) ] \ds
\\ =
  \frac{1}{\varepsilon} - \frac{1}{g(t)}
    - \frac{\exp[-\varepsilon t]}{\varepsilon}
    + \frac{\exp[-g(t) t]}{g(t)}
  \leq \frac{1}{\varepsilon} - \frac{1}{g(t)}
  \leq \frac{1}{\varepsilon} - \frac{1}{M}.
%  \leq \max\limits_{[0,t]} g^{-1} - \min\limits_{[0,t]} g^{-1}.
\end{multline*}
Similarly, since
$$
  Z(s) \geq \exp[ -M(t-s) ] - \exp[ -g(t)(t-s) ],
$$
we bound
$$
  \int_0^t Z(s) \ds \geq \frac{1}{M} - \frac{1}{\varepsilon}.
$$
Thus, the second term on the right-hand side of \ttg{eq:xhx}
can be estimated as
\begin{multline}
\label{eq:T2}
 \left| \int_0^t
   f(s) \left( \exp[ G(s) - G(t) ] - \exp[ g(t)(s-t) ] \right)
    \ds \right|
 \leq \left( \frac{1}{\varepsilon} - \frac{1}{M} \right) \max\limits_{[0,t]} |f|.
\end{multline}

To estimate the final term on the right-hand side of \ttg{eq:xhx},
we employ the Taylor expansion for $s \in [0,t]$:
$$
  f(s) = f(t-\tau) + f'(t-\tau)(s-t+\tau) + R(s) (s-t+\tau)^2/2,
$$
where $R \in C([0,t])$ and $\tau = 1/g(t)$ agrees with definition
\ttg{eq:D-QSSA}. Hence, we can calculate
\begin{multline*}
I = \int_0^t f(s) \exp[ g(t)(s-t) ] \ds - \hat x(t)
  = Q_1
  + \int_0^t R(s) \frac{(s-t+\tau)^2}{2} \exp[ g(t)(s-t) ] \ds,
\end{multline*}
where $Q_1 = \left[ t f'(t-\tau) - f(t-\tau) \right] \tau \exp[ -g(t) t ]$.
Since $R$ can be bounded as $|R(s)| \leq \max_{[-1/\varepsilon,t]} |f''|$, we can
estimate
\begin{equation}
\label{eq:T3}
  |I| \leq |Q_1| + \frac{\tau^3}{2} \max\limits_{[-1/\varepsilon,t]} |f''| + |Q_2|,
\end{equation}
where $Q_2 = - \frac{1}{2} \tau (\tau^2 + t^2) \exp[ -g(t) t ]
  \max\limits_{[-1/\varepsilon,t]} |f''| $.

Combining \ttg{eq:xtx}, \ttg{eq:hxtx},
\ttg{eq:xhx}, \ttg{eq:x0est}, \ttg{eq:T2}, \ttg{eq:T3},
and using the fact that
%$\max_{[0,t]} g^{-1} = 1/\varepsilon$, $\min_{[0,t]} g^{-1} = 1/M$, and
$\tau(t) \leq 1/\varepsilon$, we obtain
$$
  |x(t) - \tilde x(t)| \leq
   2 \left( \frac{1}{\varepsilon} - \frac{1}{M} \right)
      \max\limits_{[-1/\varepsilon,t]} |f|
    +
    \frac{1}{2\varepsilon^3} \max\limits_{[-1/\varepsilon,t]} |f''| + Q_3,
$$
where
$Q_3 = |x^0| \exp[ -\varepsilon t ] + |Q_1| + |Q_2|$.
Clearly, $Q_3 \leq Q(t)$.
\end{proof}

Notice that if $f$, and its first two derivatives, are bounded
then the reminder $Q(t)$ tends to zero as $t$ tends to infinity.
Hence, for long times the error estimate \ttg{eq:errest} is dominated by
the first two terms on its right-hand side.
The first term is proportional to the value $1/\varepsilon - 1/M$
which grows with the variation in $g(t)$. This indicates that the D-QSSA
is accurate in the case of $g(t)$ which does not vary by much.
Since the influence
of the historical values of the coefficients $f(t)$ and $g(t)$ on the solution
of the system faints away as time progresses, we may expect good approximation
properties of the D-QSSA even for those $g(t)$ that vary slowly on the time scales
of the delay $\tau(t)$. In this case, the accuracy of the D-QSSA should be preserved 
even if the coefficient $g(t)$ varies considerably on the global scale.
Further notice that the first term vanishes if $g(t)$ is constant
and the second term vanishes if $f(t)$ is a linear function.
In this case, the approximation \ttg{eq:D-QSSA} is asymptotically exact.
We formulate this statement rigorously:
\begin{corollary}
Let the assumptions of Theorem~\ref{th:errest} be satisfied.
%Let us assume the hypothesis of Theorem~\ref{th:errest} and further
Further,
let $g(t) = \gc$ be constant and $f(t)$ linear in $[-1/\varepsilon,T]$. Then
\begin{equation}
\label{eq:errestgf}
  | x(t) - \tilde x(t) | \leq
  \left[ |x^0|
    + \frac{1}{\gc} \max\limits_{[0,t]} |f|
    %+ \frac{t}{\gc} \max\limits_{[0,t]} |f'|
    + \frac{|f'|}{\gc} t
    \right] \exp(-\gc t)
\end{equation}
for all $t\in [0,T]$.
\end{corollary}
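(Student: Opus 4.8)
The plan is to derive the corollary as a direct specialization of Theorem~\ref{th:errest}, whose hypotheses are assumed to hold; the two extra assumptions make most of the terms in \ttg{eq:errest} collapse. First I would apply the theorem with the sharpest admissible constants. Since $g\equiv\gc$ on $[-1/\varepsilon,T]$, its minimum and maximum both equal $\gc$, so we may take $\varepsilon = M = \gc$ (this is exactly the normalization $\varepsilon=\min g$, $M=\max g$ used in the proof of the theorem). With this choice the factor $1/\varepsilon-1/M$ is zero, so the first term $2(1/\varepsilon-1/M)\max_{[-1/\varepsilon,t]}|f|$ on the right-hand side of \ttg{eq:errest} vanishes identically.

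Next, since $f$ is linear on $[-1/\varepsilon,T]$ we have $f''\equiv 0$. This annihilates the second term $\varepsilon^{-3}\max|f''|$ of \ttg{eq:errest} and also removes the $\max|f''|$ contribution hidden inside the remainder $Q(t)$. Substituting $\varepsilon=\gc$ and $f''\equiv0$ into the definition of $Q(t)$ leaves
\[
  Q(t) = \left[\, |x^0| + \tfrac{1}{\gc}\max_{[-1/\gc,t]}|f| + \tfrac{t}{\gc}\max_{[-1/\gc,t]}|f'| \,\right]\exp(-\gc t).
\]
Finally, linearity of $f$ makes $f'$ constant, so $\max_{[-1/\gc,t]}|f'| = |f'|$, which converts the third bracketed term into $\tfrac{|f'|}{\gc}\,t$ and gives precisely \ttg{eq:errestgf}.

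The only step needing care is the domain of $\max|f|$: the theorem delivers $\max_{[-1/\gc,t]}|f|$, while the statement writes $\max_{[0,t]}|f|$, and for small $t$ these differ because $t-\tau = t-1/\gc$ can be negative. To pin the constant down I would instead argue directly: with $g$ constant the comparison ($Z$) term in the proof of Theorem~\ref{th:errest} vanishes exactly and $\hat x(t)=\tilde x(t)$, while linearity forces the Taylor remainder $R\equiv 0$ (hence $Q_2=0$); evaluating the two elementary integrals then yields the \emph{exact} identity
\[
  x(t) - \tilde x(t) = \left[\, x^0 - \tfrac{f(t-\tau)}{\gc} + \tfrac{f'}{\gc}\,t \,\right]\exp(-\gc t),
\]
from which \ttg{eq:errestgf} follows by the triangle inequality together with $|f(t-\tau)|\le\max_{[-1/\gc,t]}|f|$ (which equals $\max_{[0,t]}|f|$ once $t\ge 1/\gc$). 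I expect the specialization route to be the intended one; there is no real analytic obstacle, only this bookkeeping of which terms vanish and over what interval the maxima are measured, and the direct identity makes the surviving constants transparent.
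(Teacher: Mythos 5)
Your first paragraph is precisely the paper's proof: the paper disposes of the corollary in one line by taking $\varepsilon = M = \gc$ (killing the first term of \ttg{eq:errest}) and $f'' \equiv 0$ (killing the second term and the $f''$ part of $Q(t)$). Where you genuinely diverge is the bookkeeping point you raise at the end, and you are right to raise it: the theorem delivers $\max_{[-1/\varepsilon,t]}|f|$, whereas the corollary prints $\max_{[0,t]}|f|$, and these are not interchangeable. As printed, the corollary can in fact fail for small $t$: take $\gc=1$, $x^0=0$, $f(s)=-Ks$ with $K$ large; then $\tilde x(0)=f(-1)/\gc=K$, while the right-hand side of \ttg{eq:errestgf} at $t=0$ is $|f(0)|=0$. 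So the bound should either carry $\max_{[-1/\varepsilon,t]}|f|$ (which is what the paper's ``immediate consequence'' argument actually yields) or be restricted to $t\geq 1/\gc$. Your direct route resolves this honestly: with constant $g$ the term $\hat x - \tilde x$ and the $Z$-comparison vanish, linearity kills the Taylor remainder, and the exact identity $x(t)-\tilde x(t) = \bigl[x^0 - f(t-\tau)/\gc + f'\,t/\gc\bigr]\exp(-\gc t)$ is correct (both elementary integrals check out). This buys an exact error formula rather than an upper bound and makes transparent that the only obstruction to the interval $[0,t]$ is the possibility $t-\tau<0$. One small slip in your write-up: the parenthetical claim that $\max_{[-1/\gc,t]}|f|$ \emph{equals} $\max_{[0,t]}|f|$ once $t\geq 1/\gc$ is false in general (the maximum over the larger interval can be strictly bigger, e.g.\ for a positive decreasing linear $f$); what you need, and what is true, is only that $t-\tau\in[0,t]$ for $t\geq 1/\gc$, hence $|f(t-\tau)|\leq\max_{[0,t]}|f|$ in that regime.
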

\begin{proof}
This is an immediate consequence of Theorem~\ref{th:errest},
because we can consider $\varepsilon = M = \gc$ and we have $f'' = 0$.
\end{proof}
Notice that error estimate \ttg{eq:errestgf} implies that
the error tends to zero as $t$ tends to infinity. Moreover, this decrease
is exponentially fast.

\section{Analysis of a system of two equations}
\label{se:system}

The analysis of the D-QSSA in the case of a single equation \ttg{eq:ode1} indicates the accuracy of this approach. However, the main interest is in systems. Therefore, in this section we analyse the accuracy of the D-QSSA for a system of two ODEs, where the first equation has the form \ttg{eq:ode1} with constant $g(t)$. In particular, we consider the system
\begin{align}
\label{eq:sys2}
\begin{split}
  \ddt x(t) &= f(y(t)) - \frac{1}{\tau} x(t), \\
  \ddt y(t) &= h(x(t),y(t)),
\end{split}  
\end{align}
where $\tau$ is a constant which coincides with the delay defined in \ttg{eq:D-QSSA}.
In this system, $x(t)$ varies on the timescale $\tau t$ while $y(t)$ varies
on the timescale $t$. Therefore, for small $\tau$, it would be conventional to apply the standard QSSA to obtain the invariant manifold of the system.
Our aim is to apply the D-QSSA to the first equation in this system and analyse to which order in $\tau$ the invariant manifold is approximated.

It turns out that in the case of constant $\tau$ the D-QSSA is equivalent 
up to higher-order terms to a first-order correction of the standard QSSA. 
Therefore, we first
introduce the standard QSSA approximation of system \ttg{eq:sys2} as
\begin{align}
 \label{eq:defx0}
\begin{split}
  x_0(t) &= \tau f( y_0(t)), \\ 
  \ddt y_0(t) &= h( x_0(t), y_0(t) ).
\end{split}
\end{align}
The approximation of \ttg{eq:sys2} based on the first-order correction is then defined as
\begin{align}
\label{eq:defhatx}
\begin{split}  
  %\hat x(t) &= x_0(t) + \tau x_1(t), \quad \text{where }
  \hat x(t) &= x_0(t) - \tau^2 f'(y_0(t)) h(x_0(t),y_0(t)),
  %x_1(t) = - \ddt x_0(t) = - \tau f'(y_0(t)) h(x_0(t),y_0(t)), 
\\ 
  \ddt \hat y(t) &= h(\hat x(t), \hat y(t)),
  %\frac{\partial h}{\partial x} (x_0(t),y_0(t)) x_2(t) / \frac{\partial h}{\partial y} (x_0(t),y_0(t)).
\end{split}  
\end{align}
where $f'$ stands for the derivative of $f$.
First, we show that this first-order correction approximates the invariant manifold
of system \ttg{eq:sys2} up to terms quadratic in $\tau$.
\begin{theorem}
The approximation $\hat x, \hat y$ based on the first-order correction \ttg{eq:defhatx} satisfies system \ttg{eq:sys2} up to terms quadratic in $\tau$.
\end{theorem}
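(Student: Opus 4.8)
The plan is to substitute the pair $(\hat x,\hat y)$ into the two equations of \ttg{eq:sys2} and to show that the resulting residual is of order $\tau^2$. The second equation $\ddt \hat y = h(\hat x,\hat y)$ holds exactly by the definition \ttg{eq:defhatx}, so the whole task reduces to controlling the residual
$$
  R(t) = \ddt \hat x(t) - f(\hat y(t)) + \frac{1}{\tau}\hat x(t)
$$
of the first equation. Throughout I regard $\hat x$ as a function of $\hat y$ via \ttg{eq:defhatx}, i.e.\ I read $y_0 = \hat y$ and $x_0 = \tau f(\hat y)$, and I assume $f$ and $h$ are smooth with bounded derivatives, so that all remainder terms can be collected into a single $O(\tau^2)$ contribution on the fixed interval $[0,T]$.

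First I would treat the algebraic (non-differentiated) part. Inserting $x_0 = \tau f(\hat y)$ into \ttg{eq:defhatx} gives $\tfrac{1}{\tau}\hat x = f(\hat y) - \tau f'(\hat y)\,h(x_0,\hat y)$, hence
$$
  f(\hat y) - \frac{1}{\tau}\hat x = \tau f'(\hat y)\,h(x_0,\hat y).
$$
This already displays the crucial cancellation of the $O(1)$ and $1/\tau$ pieces, leaving a contribution of order $\tau$.

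Next I would differentiate $\hat x$. Applying $\ddt$ to \ttg{eq:defhatx} and using $\ddt\hat y = h(\hat x,\hat y)$ by the chain rule, the leading contribution comes from $\ddt x_0 = \tau f'(\hat y)\,h(\hat x,\hat y)$, whereas differentiating the explicit $\tau^2$-correction produces only terms carrying a prefactor $\tau^2$. Thus $\ddt\hat x = \tau f'(\hat y)\,h(\hat x,\hat y) + O(\tau^2)$. I would then compare the two expressions: since $\hat x - x_0 = -\tau^2 f'(\hat y)\,h(x_0,\hat y) = O(\tau^2)$, smoothness of $h$ yields $h(\hat x,\hat y) = h(x_0,\hat y) + O(\tau^2)$, so the order-$\tau$ terms of $\ddt\hat x$ and of $f(\hat y)-\tfrac{1}{\tau}\hat x$ agree. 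Subtracting them leaves $R(t) = O(\tau^2)$, which is the claim.

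The Taylor expansions of $f$ and $h$ and the collecting of remainders are routine; the step requiring the most care is the interplay between the factor $1/\tau$ in \ttg{eq:sys2} and the powers of $\tau$ in \ttg{eq:defhatx}. Because the equation multiplies $\hat x$ by $1/\tau$, the $\tau^2$-correction built into $\hat x$ is promoted to an $O(\tau)$ effect, and it is exactly this promoted term that must cancel against the order-$\tau$ part of $\ddt\hat x$; securing that cancellation precisely, rather than being left with a spurious $O(\tau)$ residual, is the heart of the argument. (If one instead reads $x_0,y_0$ in \ttg{eq:defhatx} as the genuine QSSA trajectory of \ttg{eq:defx0}, one additionally needs $\hat y - y_0 = O(\tau^2)$, which follows from a Gr\"onwall estimate using $\hat x - x_0 = O(\tau^2)$; this does not affect the conclusion.)
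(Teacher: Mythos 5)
Your proof is correct, and it takes a recognizably different route from the paper's. The paper posits a regular expansion $x(t) = x_0(t) + \tau x_1(t) + \mathcal{O}(\tau^2)$, $y(t) = y_0(t) + \tau y_1(t) + \mathcal{O}(\tau^2)$ of the \emph{exact} solution of \ttg{eq:sys2}, substitutes it into the first equation, and matches powers of $\tau$: the $\mathcal{O}(1)$ and $1/\tau$ parts cancel because $x_0 = \tau f(y_0)$, and the order-$\tau$ balance forces $x_1 = -\ddt x_0$, which is precisely the correction built into \ttg{eq:defhatx}. You never touch the exact solution: you substitute the pair $(\hat x, \hat y)$ itself into \ttg{eq:sys2} and estimate the residual, exploiting the same two cancellations (the $1/\tau$ term against $f(\hat y)$, then the promoted order-$\tau$ term against $\ddt \hat x$). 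Your organization buys a more literal proof of the stated claim --- the residual is $\mathcal{O}(\tau^2)$ --- without the formal, unproved assumption that the solution of a singularly perturbed system admits a regular $\tau$-expansion (which in fact fails in the initial layer), and it makes the second equation hold exactly by construction rather than ``in a similar manner''. You also pin down an ambiguity the paper glosses over: whether $\hat x$ in \ttg{eq:defhatx} is read through the QSSA trajectory $(x_0, y_0)$ of \ttg{eq:defx0} or self-consistently through $\hat y$; your Gr\"onwall remark shows both readings lead to the same conclusion. What the paper's route buys in exchange is interpretability: it exhibits $\hat x$ as the first-order truncation of the exact solution's expansion, which explains where the specific form of the correction term comes from, rather than merely verifying that it works.
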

\begin{proof}
Let us consider expansions 
\begin{align*}
  x(t) &= x_0(t) + \tau x_1(t) + \mathcal{O}(\tau^2), \\
  y(t) &= y_0(t) + \tau y_1(t) + \mathcal{O}(\tau^2)
\end{align*}
of the exact solution $x,y$ of system \ttg{eq:sys2}.
Substituting these into \ttg{eq:sys2}, we obtain
$$
  \tau \ddt x_0 + \tau^2 \ddt x_1 = %\tau f(y_0) +
    \tau^2 f'(y_0) y_1 
    %- x_0 
    - \tau x_1 + \mathcal{O}(\tau^2),
$$
where we used the Taylor expansion of $f(y_0 + \tau y_1 + \mathcal{O}(\tau^2))$
and the definitions \ttg{eq:defx0}.
Comparing the terms of order $\tau$, we find
that the equality is satisfied up to terms quadratic in $\tau$ if $x_1 = - \mathrm{d} x_0 / \mathrm{d}t$, which coincides with the definition 
\ttg{eq:defhatx} of $\hat x$.
Clearly, the equation for $y$ is satisfied in a similar manner.
\end{proof}

Second, we prove that the D-QSSA approximation is equivalent to
the first-order correction \ttg{eq:defhatx} up to terms cubic in $\tau$.
To this end we introduce the D-QSSA $\tilde x, \tilde y$ and approximate system \ttg{eq:sys2}
by
\begin{align}
\label{eq:deftildex}
\begin{split}  
  \tilde x(t) &= \tau f(\tilde y(t - \tau)), \\
  \ddt \tilde y(t) &= h( \tilde x(t), \tilde y(t) ).
\end{split}  
\end{align}

\begin{theorem}
The D-QSSA approximation $\tilde x$
is equivalent to the first-order correction $\hat x$ up to terms cubic in $\tau$.
\end{theorem}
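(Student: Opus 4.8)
The plan is to Taylor-expand the delayed argument in the D-QSSA definition \ttg{eq:deftildex} and to show that, after collecting powers of $\tau$, the resulting expression for $\tilde x$ reproduces the first-order correction manifold \ttg{eq:defhatx} with a remainder of order $\tau^3$. The statement is best read as an equivalence of the two functional relations $x = \Phi(y)$ that the approximations impose between the fast and slow variables, rather than as a pointwise identity of trajectories: by construction $\hat x = \Phi(\hat y)$ for a manifold function $\Phi$, and the goal is to show $\tilde x = \Phi(\tilde y) + \mathcal{O}(\tau^3)$ for the same $\Phi$, so that the slow variables $\tilde y$ and $y_0$ play interchangeable roles and need not coincide as functions of $t$.

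First I would expand $\tilde y(t-\tau)$ about $t$. Using the governing equation $\ddt \tilde y = h(\tilde x, \tilde y)$ from \ttg{eq:deftildex}, Taylor's theorem gives
\[
  \tilde y(t-\tau) = \tilde y(t) - \tau\, h(\tilde x(t), \tilde y(t)) + \mathcal{O}(\tau^2),
\]
the remainder being controlled by a bound on the second derivative of $\tilde y$ on the interval. Substituting this into $\tilde x(t) = \tau f(\tilde y(t-\tau))$ and expanding $f$ about $\tilde y(t)$, I obtain
\[
  \tilde x(t) = \tau f(\tilde y(t)) - \tau^2 f'(\tilde y(t))\, h(\tilde x(t), \tilde y(t)) + \mathcal{O}(\tau^3),
\]
since the quadratic Taylor remainder of $f$ involves $(\tilde y(t-\tau) - \tilde y(t))^2 = \mathcal{O}(\tau^2)$, which the outer factor $\tau$ pushes into the $\mathcal{O}(\tau^3)$ term, as does the second-derivative contribution of $\tilde y$.

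Next I would close the right-hand side in terms of $\tilde y$ alone. The factor $\tilde x(t)$ appears only inside the $\tau^2$ term, so replacing it by its leading order $\tau f(\tilde y(t))$, which differs from $\tilde x(t)$ by $\mathcal{O}(\tau^2)$, alters the expression only at order $\tau^3$. This yields
\[
  \tilde x(t) = \tau f(\tilde y(t)) - \tau^2 f'(\tilde y(t))\, h\bigl(\tau f(\tilde y(t)), \tilde y(t)\bigr) + \mathcal{O}(\tau^3).
\]
Comparing with \ttg{eq:defx0} and \ttg{eq:defhatx}, the leading terms are exactly $x_0 - \tau^2 f'(\cdot)\, h(x_0,\cdot)$ evaluated at $\tilde y$, i.e. the first-order correction $\hat x$ with $\tilde y$ in the role of $y_0$. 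Hence $\tilde x$ and $\hat x$ determine the same fast-variable manifold up to $\mathcal{O}(\tau^3)$, which is the claimed equivalence.

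The main obstacle I anticipate is turning these formal cancellations into uniform, rigorous estimates. This requires assuming enough smoothness of $f$ and $h$ (so that $f \in C^2$ and the first two derivatives of $\tilde y$ exist and are bounded on $[0,T]$) together with a Lipschitz bound on $h$ to control the substitution error $\tilde x - \tau f(\tilde y)$ in the $\tau^2$ term. Once each discarded contribution is shown to be genuinely $\mathcal{O}(\tau^3)$ uniformly in $t$, the comparison of the two manifold functions is immediate and the result follows.
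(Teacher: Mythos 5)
Your proposal is correct and follows essentially the same route as the paper's proof: Taylor-expand $f(\tilde y(t-\tau))$ about $\tilde y(t)$ and use the governing equation $\ddt \tilde y = h(\tilde x,\tilde y)$ to obtain $\tilde x(t) = \tau f(\tilde y(t)) - \tau^2 f'(\tilde y(t))\,h(\tilde x(t),\tilde y(t)) + \mathcal{O}(\tau^3)$, which is exactly where the paper stops. Your two additional refinements --- replacing $\tilde x$ inside the $\tau^2$ term by $\tau f(\tilde y)$ to close the expression in the slow variable alone, and reading the equivalence as one of manifold functions (so that $\tilde y$ and $y_0$ need not coincide pointwise) --- make explicit what the paper leaves implicit in its closing sentence, but they do not constitute a different argument.
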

\begin{proof}
Using the Taylor expansion in the definition \ttg{eq:deftildex} of the D-QSSA,
we obtain
\begin{multline*}
  \tilde x(t) = \tau f( \tilde y(t - \tau) ) 
  = \tau f(\tilde y(t)) - \tau^2 f'(\tilde y(t)) \ddt \tilde y(t) + \mathcal{O}(\tau^3)
\\  
  = \tau f(\tilde y(t)) - \tau^2 f'(\tilde y(t)) h(\tilde x(t),\tilde y(t)) 
    + \mathcal{O}(\tau^3).
\end{multline*}
This coincides with the definition \ttg{eq:defhatx} 
of $\hat x(t)$ up to the $\mathcal{O}(\tau^3)$ terms.
\end{proof}

Consequently, the D-QSSA approximation satisfies system \ttg{eq:sys2} up to terms quadratic in $\tau$. In other words, the D-QSSA approximates the invariant manifold 
of system \ttg{eq:sys2} up to terms quadratic in $\tau$.

Thus, in case of constant and small $\tau$ the D-QSSA is equivalent to the first-order
correction \ttg{eq:defhatx} and the delay can be avoided. 
However, if the delay is not small then there is no theoretical reason for this first-order correction to be accurate, but the D-QSSA still has the potential to yield acceptable accuracy.
For example, in Section~\ref{se:cellcyc} below, we present an oscillatory system, where the time varying delay grows up to 20\,\% of the period and we still observe good accuracy of the D-QSSA approximation.
Further, the D-QSSA approximation is defined and produces good results even if the term $g(t)$ depends on $t$ or on the other slow variables, which is the case of the oscillatory system in Section~\ref{se:cellcyc}. 
%??? To illustrate this potential, we provide a numerical example of an oscillatory system, where both the standard QSSA and the first-order correction fail to produce an oscillatory solution, but the D-QSSA naturally oscillates. See Section~??? below. 
Furthermore, the D-QSSA \ttg{eq:deftildex} is simpler to understand and use in comparison with the first-order correction \ttg{eq:defhatx}.

\section{D-QSSA for mass action systems}
\label{se:massaction}

In this section we show that the mass action ODEs
describing kinetics of chemical systems can be, under certain assumptions,
expressed in the form \ttg{eq:ode1}.
We explain how the D-QSSA \ttg{eq:D-QSSA} can be formally applied 
to these systems and provide an explicit formula for the approximation.
We emphasize that the theory presented in Sections~\ref{se:D-QSSA} and \ref{se:system} 
is limited to special cases of mass action systems.

We will consider general chemical systems and for their description
we will use the notation inspired by \cite{Chellaboina:2009}.
Consider $\Nchs$ chemical species $X_1,\dots, X_\Nchs$ and $q$ chemical reactions
\begin{equation}
\label{eq:chemsys}
  \sum\limits_{j=1}^\Nchs A_{ij} X_j \stackrel{k_i}{\longrightarrow}
  \sum\limits_{j=1}^\Nchs B_{ij} X_j, \quad i=1,2,\dots,q,
\end{equation}
where $k_i > 0$ is the reaction rate of the $i$-th reaction. The stoichiometric coefficients $A_{ij}$ and $B_{ij}$ are assumed to be non-negative integers.
Notice that system \ttg{eq:chemsys} can be expressed in matrix-vector form as
\begin{equation}
\label{eq:chemsysvec}
  A \b{X} \stackrel{k}{\longrightarrow} B \b{X},
\end{equation}
where $\b{X} = [X_1,\dots,X_\Nchs]^\top$ is a column vector of chemical species,
$\b{k} = [k_1,\dots,k_q]^\top$ is a column vector of reaction rates, and
$A=[A_{ij}]$ and $B=[B_{ij}]$ are $q\times \Nchs$ matrices of stoichiometric coefficients.

The time evolution of concentrations $\b{x}(t) = [x_1(t), \dots, x_\Nchs(t)]^\top$ of
respective chemical species is modelled by a mass action system of
ODEs.
To express this system in a vector form,
we denote by $K\in\R^{q\times q}$
a diagonal matrix with values $k_1,\dots,k_q$ on the diagonal and
by $M = (B-A)^\top \in \R^{\Nchs\times q}$ the stoichiometric matrix.
Further, we introduce the vector-matrix exponentiation \cite{Chellaboina:2009}.
By definition, $\b{x}^A(t)$ is a vector in  $\R^q$
with entries given by $\prod_{\ell=1}^\Nchs x_\ell^{A_{i\ell}}(t)$ for $i=1,2,\dots,q$.
Now, the mass action system can be expressed as
\begin{equation}
\label{eq:massaction}
  \ddt \b{x}(t) = M K \b{x}^A(t), \quad t\geq 0
\end{equation}
with an initial condition
\begin{equation}
 \label{eq:massactioninicond}
  \b{x}(0) = \b{x}^0.
\end{equation}

Equivalently, system \ttg{eq:massaction} can be expressed in component-wise
notation as
\begin{equation}
\label{eq:massaction2}
  \ddt x_j(t) = \sum\limits_{i=1}^q M_{ji} k_i
    \prod\limits_{\ell=1}^\Nchs x_\ell^{A_{i\ell}}(t), \quad j=1,2,\dots,\Nchs.
\end{equation}

It is well known that mass action systems preserve nonnegativity, 
see for example \cite{HorJac:1972} or the recent review \cite[Thm.~2]{Chellaboina:2009}.
Thus, the solution $\b{x}(t)$ of \ttg{eq:massaction} is guaranteed 
to have non-negative entries for all $t \geq 0$
provided $\b{x}^0$ has non-negative entries.
We will use this fact in the rest of the paper.

The D-QSSA will be applied to a portion of system variables $x_j$.
Variables approximated by the D-QSSA will be called
fast, and the other variables slow.
%In order to apply the D-QSSA we have to consider a splitting to
%fast and slow variables.
Identification of the fast and slow variables can be done by
the standard quasi-steady state analysis, see e.g.\ \cite{SegSle:1989}.
Without loss of generality, we will assume that
variables $x_1, x_2, \dots, x_{\Nf}$ 
%and their corresponding  species  $X_1, X_2, \dots, X_{\Nf}$ 
are fast and will be approximated by the D-QSSA.
The dynamics of the slow variables $x_{\Nf+1}, \dots, x_\Nchs$ will be
determined by the resulting system of delay differential equations.
Naturally, we require $0 < \Nf < \Nchs$.

In order to transform equations \ttg{eq:massaction2} for $j=1,2,\dots,\Nf$
to the form \ttg{eq:ode1}, we have to consider the following assumption on
the chemical system \ttg{eq:chemsys}.
\begin{enumerate}
\item[{\bfseries A1}.]
  If $A_{ij} \neq B_{ij}$ %$M_{ji} \neq 0$
  then either $A_{ij} = 0$ or $A_{ij} = 1$,
  for all $i=1,2,\dots,q$ and $j=1,2,\dots,\Nf$.
%\item[A2.]
%  For all $j=1,2,\dots,\Nf$ there exists $i^* \in \{1,2,\dots,q\}$ such that
%  $A_{i^*j} = 1$ and $B_{i^*j} = 0$. %$M_{ji^*} = -1$. 
\end{enumerate}

Under this assumption equations \ttg{eq:massaction2} for the fast variables
can be expressed in the form
\begin{equation}
\label{eq:massaction3}
   \ddt x_j(t) = f_j(t) - g_j(t)x_j(t), \quad j=1,2,\dots,\Nf,
\end{equation}
where
\begin{align*}
  f_j(t) &= f_j(x_1(t),\dots,x_\Nchs(t)) = \sum\limits_{i\in\cF_j} M_{ji} k_i
    \prod\limits_{\ell=1,\ell\neq j}^\Nchs x_\ell^{A_{i\ell}}(t),
\\
  %\quad %\text{and}\quad
  g_j(t) &= g_j(x_1(t),\dots,x_\Nchs(t)) = -\sum\limits_{i\in\cG_j} M_{ji} k_i
    \prod\limits_{\ell=1,\ell\neq j}^\Nchs x_\ell^{A_{i\ell}}(t)
\end{align*}
with
$
  \cF_j = \{ i\in\{1,2,\dots,q\} : M_{ji} \neq 0 \textrm{ and } A_{ij} = 0 \}
$
  and %\quad%\text{and}\quad
$
  \cG_j = \{ i\in\{1,2,\dots,q\} : M_{ji} \neq 0 \textrm{ and } A_{ij} = 1 \}.
$

Assumption A1 guarantees that there are no quadratic and higher-order terms
with respect to $x_j$ in \ttg{eq:massaction3}.
However, it limits the class of systems to which the D-QSSA can be applied.
For example, a simple dimerization reaction $2 X_1 \rightarrow X_2$ violates
assumption A1. On the other hand, many chemical systems satisfy this assumption.
For examples see Section~\ref{se:numex} below or \cite{Vej:MB}.

% Similarly, assumption A2 implies that $g_j(t)$ does not vanish
% for all $j=1,2,\dots \Nf$ and, hence, equations \ttg{eq:massaction3} really
% have the form \ttg{eq:D-QSSA}.

Note that in general $f_j(t) = f_j(x_1(t),\dots,x_\Nchs(t))$ is a function
of $x_1(t),\dots,x_\Nchs(t)$. However, practically it does not depend on all
$x_1(t),\dots,x_\Nchs(t)$.
In fact, $f_j$ is a function of $x_\ell(t)$ for $\ell \in \wcF_j$,
where $\wcF_j =\{ \ell\neq j : \exists i \in \cF_j : A_{i\ell} \neq 0 \}$.
Namely $f_j(t) = f( \{ x_\ell(t) : \ell \in \wcF_j\})$.
Similarly, $g_j(t) = g( \{ x_\ell(t) : \ell \in \wcG_j\})$,
where $\wcG_j =\{ \ell\neq j : \exists i \in \cG_j : A_{i\ell} \neq 0 \}$.

Now, we are ready to apply the D-QSSA \ttg{eq:D-QSSA} to equations \ttg{eq:massaction3}.
As a result we obtain approximations
\begin{equation}
\label{eq:D-QSSAma}
%   \tilde x_j(t) = S_j(t),
%   \quad\text{where } S_j(t) = \frac{f_j(t-\tau_j(t))}{g_j(t-\tau_j(t))}
  \tilde x_j(t) = \frac{f_j(t-\tau_j(t))}{g_j(t-\tau_j(t))},
  \quad\textrm{where } \tau_j(t) = \frac{1}{g_j(t)},\ j=1,2,\dots,\Nf.
\end{equation}
%Clearly, thanks to assumption A3. the delay $\tau_j(t)$ is well defined
%and positive.
As was recognized already in Definition~\ref{de:D-QSSA}, these assumptions
are well defined only if all $g_j(t)$ are positive. Therefore we
make the following assumption.
\begin{enumerate}
\item[{\bfseries A2}.]
  Let $g_j(t) > 0$ for all $t>0$ and $j=1,2,\dots,\Nf$.
\end{enumerate}

We use approximations \ttg{eq:D-QSSAma} to reduce system \ttg{eq:massaction2}.
In order to do this in a straightforward way, we will assume
that the functions $\tilde x_j$ are independent of all $x_k$, $j,k=1,2,\dots,\Nf$.
This means that functions $f_j$ and $g_j$ for $j=1,2,\dots,\Nf$ depend on
$x_k$ for $k=\Nf+1,\dots,\Nchs$ only.
This assumption can be rigorously formulated in terms of sets
$\wcF_j$ and $\wcG_j$ as follows.
\begin{enumerate}
\item[{\bfseries A3}.]
  Let $\wcF_j \cup \wcG_j$ be such that it does not contain
  $1,2,\dots,\Nf$ for all $j=1,2,\dots,\Nf$.
\end{enumerate}
Thus, the functions $\tilde x_j$ can be expressed in terms of $x_{\Nf+1}, x_{\Nf+2}, \dots, x_\Nchs$, i.e.,
\begin{equation}
  \label{eq:txj}
  \tilde x_j(t) = \tilde x_j( x_{\Nf+1}(t), \dots, x_\Nchs(t) )
  \quad\textrm{for }j=1,2,\dots,\Nf.
\end{equation}

These relations can be substituted into \ttg{eq:massaction2} as approximations
for $x_1,x_2,\dots,x_{\Nf}$ and we obtain the reduced system
\begin{align}
  \label{eq:redsys1}
  \tilde x_j(t) &= \frac{f_j( \tilde x_{\Nf+1}(t-\tau_j(t)), \dots, \tilde x_\Nchs(t-\tau_j(t)) )}
                        {g_j( \tilde x_{\Nf+1}(t-\tau_j(t)), \dots, \tilde x_\Nchs(t-\tau_j(t)) )},
  \quad j=1,2,\dots,\Nf,
  \\
  \label{eq:redsys2}
  \ddt \tilde x_j(t) &= \sum\limits_{i=1}^q M_{ji} k_i
    \prod\limits_{\ell=1}^{\Nf} \tilde x_\ell^{A_{i\ell}}(t)
    \cdot
    \prod\limits_{\ell=\Nf+1}^\Nchs \tilde x_\ell^{A_{i\ell}}(t),
  \quad j=\Nf+1,\Nf+2,\dots,\Nchs,
\end{align}
where
\begin{equation}
  \label{eq:redsysd}
  \tau_j(t) = \frac{1}{g_j( \tilde x_{\Nf+1}(t), \dots, \tilde x_\Nchs(t) )},
  \quad j = 1,2,\dots,\Nf.
\end{equation}
System \ttg{eq:redsys1}--\ttg{eq:redsys2} is a system of $\Nchs-\Nf$ delay differential
equations with delays \ttg{eq:redsysd} dependent on the state variables
$\tilde x_{\Nf+1}(t), \dots, \tilde x_\Nchs(t)$.
To make this system solvable, we
must have values of $x_j(t)$ for $t$ negative. The simplest assumption is the
constant extension of the initial condition \ttg{eq:massactioninicond}:
%have to introduce an initial condition even for negative values of $t$. We naturally consider
$$
  x_j(t) = x^0_j \quad\textrm{for } t \leq 0
  \textrm{ and } j=\Nf+1,\Nf+2,\dots,\Nchs,
$$
where $x^0_j$ stand for the entries of $\b{x}^0$.

The fact that delays are state dependent might complicate the subsequent analysis,
but in practical cases these delays can be approximated by constants.
For an illustration of this effect, see Sections~\ref{se:numex} 
and \ref{se:cellcyc} below.

Assumption A3 is not fundamental. If it is not satisfied, then the reduction
method can still be used in a recurrent manner. We can decrease the number of fast
variables until assumption A3 is satisfied and construct the reduced system
\ttg{eq:redsys1}--\ttg{eq:redsys2} with a smaller number of fast variables.
Then we can attempt to reduce the resulting system again. In many cases
this recurrent reduction enables us to reduce the original system substantially.

Let us note that after one step of this reduction, the system need not be
in mass action form. However, as soon as the particular equation can
be expressed in the form \ttg{eq:D-QSSA} with positive $g(t)$ then
the D-QSSA can be applied. An example of the recurrent application 
of the D-QSSA is presented below in Section~\ref{se:cellcyc}.

Finally, let us mention a sufficient condition for the validity of the assumption A2
which can be useful in certain applications.
Functions $g_j(t) = g_j(x_1(t),\dots,x_\Nchs(t))$ are in general polynomials
in $x_i(t)$, $i=1,2,\dots,\Nchs$. Since $x_i(t) \geq 0$, assumption A2 corresponds 
to positivity of a multivariate polynomial in the nonnegative orthant.
Thus, a straightforward sufficient condition is nonnegativity of all coefficients
of this polynomial and positivity of at least one of its terms.

\section{Expression of Hes1 protein}
\label{se:numex}

This section shows an example of the application of the D-QSSA to a chemical system.
First, we provide details about the mass-action model and its reduction by the D-QSSA. Second, we solve the systems numerically and show the accuracy of this reduction.

\subsection{Model reduction}

Let us consider the following chemical system which was inspired by the model \cite{Monk:2003}
for expression of the Hes1 protein:
$$
  D \stackrel{\alpham}{\longrightarrow} D + M \qquad
  M \stackrel{\alphap}{\longrightarrow} M + P \qquad
  D + n P \dblarrow{\gamma_1}{\gamma_{-1}} D' \qquad
  M \stackrel{\mum}{\longrightarrow} \emptyset \qquad
  P \stackrel{\mup}{\longrightarrow} \emptyset,
$$
where $D$ corresponds to the \textsl{hes1} gene, $M$ to \textsl{hes1} mRNA,
and $P$ to Hes1 protein.
In this model, the Hes1 protein can bind to $n$ promoter sites of the gene producing an inactive complex $D'$. The rate constant $\alpham$ corresponds to transcription of the gene to the mRNA molecule, $\alphap$ to translation of the mRNA to the protein, $\gamma_1$ and $\gamma_{-1}$ are binding and unbinding rates of Hes1 to the promoter region, and $\mum$ and $\mup$ are the degradation rates of $M$ and $P$, respectively.

The law of mass action yields a system of ODEs for concentrations of $D'=D'(t)$, $D=D(t)$, $M=M(t)$, and $P=P(t)$ of the form \ttg{eq:massaction}.
We complete this system with a natural initial condition
\begin{equation}
  \label{eq:exinicond}
  D(0)=1,\quad D'(0)=0,\quad M(0)=0,\quad P(0)=0.
\end{equation}
Biologically, binding and unbinding of a transcription factor (Hes1 protein in this case)
to the promoter region of a gene is often a frequently occurring reaction
in comparison with the relatively slow processes of transcription and translation.
Therefore, in analogy with \cite{Vilar:2002}, we
consider $D$ and $D'$ to be the fast species.
With this choice, we can readily verify the validity of
assumptions A1 and A2. However, assumption A3 is not satisfied.
Fortunately, this assumption is technical only. Moreover, in this case it
can be easily overcome by elimination of one of the variables $D$ or $D'$.

Initial condition \ttg{eq:exinicond}, together with the fact that $D(t) + D'(t)$
is constant, implies $D'(t)=1-D(t)$. Thus, eliminating variable $D'(t)$
from the system yields the following three equations:
\begin{align}
\label{eq:exODE_D}
\ddt D &= \gamma_{-1} - (\gamma_{-1} + \gamma_1 P^n) D,  \\
\label{eq:exODE_M}
\ddt M &= \alpham D - \mum M,  \\
\label{eq:exODE_P}
\ddt P &= \alphap M - \mup P + n \left[\gamma_{-1} - (\gamma_{-1} + \gamma_1 P^n)D \right].
\end{align}
Notice that equation \ttg{eq:exODE_D} now satisfies all assumptions A1--A3.

It is convenient to rescale the unknowns in this system to be of comparable size.
We follow the scaling from \cite{Monk:2003}, define $m = M/\alpham$, $p=P/(\alpham\alphap)$, and transform system \ttg{eq:exODE_D}--\ttg{eq:exODE_P} to
\begin{align}
\label{eq:exODE_d}
\ddt D &= \gamma_{-1} - (\gamma_{-1} + \gamma p^n) D,  \\
\label{eq:exODE_m}
\ddt m &= D - \mum m,  \\
\label{eq:exODE_p}
\ddt p &= m - \mup p + \frac{n}{\alpha} \left[\gamma_{-1} - (\gamma_{-1} + \gamma p^n)D \right].
\end{align}
Here, $\alpha = \alphap \alpham$ and $\gamma = \gamma_1 (\alphap \alpham)^n$.

To use the standard QSSA,
we approximate $D(t)$ by its quasi-steady state approximation
\begin{equation}
  \label{eq:exQSSA_D}
  \widehat D(t) = \frac{\gamma_{-1}}{\gamma_{-1} + \gamma \whp^n(t)}.
\end{equation}
Functions $\whp(t)$ and $\whm(t)$ approximate $p(t)$ and $m(t)$,
respectively, and are determined by reducing system
\ttg{eq:exODE_d}--\ttg{eq:exODE_p} to the following two equations:
\begin{align}
\label{eq:exQSSA_M}
\ddt \whm &= \frac{\gamma_{-1}}{\gamma_{-1} + \gamma \whp^n} - \mum \whm,  \\
\label{eq:exQSSA_P}
\ddt \whp &= \whm - \mup \whp.
\end{align}

Alternatively, we can reduce system \ttg{eq:exODE_d}--\ttg{eq:exODE_p} using
the D-QSSA. Clearly, equation \ttg{eq:exODE_d} is in the
form \ttg{eq:D-QSSA} with $x(t) = D(t)$, $f(t) = \gamma_{-1}$ and
$g(t) = g( p(t) ) = \gamma_{-1} + \gamma p^n(t)$.
Thus, Definition~\ref{de:D-QSSA} yields the approximation
\begin{equation}
  \label{eq:exD-QSSA_D}
  \widetilde D(t) = \frac{\gamma_{-1}}{\gamma_{-1} + \gamma \wtp^n(t-\tau(t))},
  \quad\textrm{where }
  \tau(t) = \frac{1}{\gamma_{-1} + \gamma \wtp^n(t)}.
\end{equation}
According to \ttg{eq:redsys1}--\ttg{eq:redsys2},
system \ttg{eq:exODE_d}--\ttg{eq:exODE_p}
reduces to the following system of differential equations with delay:
\begin{align*}
%\label{eq:exD-QSSA_M}
\ddt \wtm(t) &= \widetilde D(t) - \mum \wtm(t),  \\
%\label{eq:exD-QSSA_P}
\ddt \wtp(t) &= \wtm(t) - \mup \wtp(t)
  + \frac{n}{\alpha} \left[\gamma_{-1} - (\gamma_{-1} + \gamma \wtp^n(t)) \widetilde D(t) \right].
\end{align*}
Alternatively, we can use the substitution \ttg{eq:exD-QSSA_D} and rewrite this system as
%Using substitution \ttg{eq:exD-QSSA_D}, we can readily rewrite this system as
\begin{align}
\label{eq:exD-QSSA_M}
\ddt \wtm(t) &= \frac{\gamma_{-1}}{\gamma_{-1} + \gamma \wtp^n(t-\tau(t))} - \mum \wtm(t),  \\
\label{eq:exD-QSSA_P}
\ddt \wtp(t) &= \wtm(t) - \mup \wtp(t)
 + \gamma_{-1} \frac{n}{\alpha} \left[ 1 - \frac{\gamma_{-1} + \gamma \wtp^n(t)}{\gamma_{-1} + \gamma {\wtp}^n(t-\tau(t))} \right].
\end{align}
The initial condition \ttg{eq:exinicond} extended
to negative values of $t$ reads
\begin{equation}
  \label{eq:exD-inicond}
  \wtm(t)=0,\quad \wtp(t)=0
  \quad\textrm{for all } t \leq 0.
\end{equation}

At this point, it is interesting to compare the derived systems \ttg{eq:exQSSA_M}--\ttg{eq:exQSSA_P}
and \ttg{eq:exD-QSSA_M}--\ttg{eq:exD-QSSA_P} with the rescaled version of the original
delay differential equation system use in \cite{Monk:2003}:
% \begin{align}
% \label{eq:Monk_M}
% \ddt \overline M(t) &= \alpham \frac{1}{1 + \bigl( \overline P(t-\tautr) / P_0 \bigr)^n} - \mum \overline M(t),  \\
% \label{eq:Monk_P}
% \ddt \overline P(t) &= \alphap \overline M(t) - \mup \overline P(t).
% \end{align}
\begin{align}
\label{eq:Monk_M}
\ddt \overline m(t) &= \frac{1}{1 + \bigl( \overline p(t-\tautr) / p_0 \bigr)^n} - \mum \overline m(t),  \\
\label{eq:Monk_P}
\ddt \overline p(t) &= \overline m(t) - \mup \overline p(t).
\end{align}
Here, $\overline m(t)$ and $\overline p(t)$ model the rescaled concentrations of
$M$ and $P$, respectively, in the same manner as solutions of systems \ttg{eq:exQSSA_M}--\ttg{eq:exQSSA_P} and \ttg{eq:exD-QSSA_M}--\ttg{eq:exD-QSSA_P}.
A distinctive feature of system \ttg{eq:Monk_M}--\ttg{eq:Monk_P}
is the delay $\tautr$, which is interpreted as the transcriptional delay.
Transcription is a complicated process which moves sequentially along the chain
of the mRNA molecule and synthesizes a protein. The time needed to complete the
synthesis can be significant and, therefore, system
\ttg{eq:Monk_M}--\ttg{eq:Monk_P} compensates for it by introducing
the delay $\tautr$.

Systems \ttg{eq:exQSSA_M}--\ttg{eq:exQSSA_P} and \ttg{eq:exD-QSSA_M}--\ttg{eq:exD-QSSA_P}
have been derived from mass action kinetics using the QSSA and the D-QSSA, respectively.
However, system \ttg{eq:Monk_M}--\ttg{eq:Monk_P} was introduced in a phenomenological  manner using the heuristic Hill function $[1 + (p/p_0)^n]^{-1}$ combined with the time delay $\tautr$ corresponding to transcription, see \cite{Monk:2003} and \cite{Hirata:2002}. Interestingly, if $p_0 = (\gamma_{-1}/\gamma)^{1/n}$ and $\tau(t) = \tautr$ then equations \ttg{eq:exD-QSSA_M} and \ttg{eq:Monk_M} are identical. Similarly, equation \ttg{eq:exQSSA_P} is the same as \ttg{eq:Monk_P}. Thus, the system \ttg{eq:Monk_M}--\ttg{eq:Monk_P} can be rigorously derived from the mass-action kinetics using the QSSA and the D-QSSA.
However, the biological meaning of the delay $\tau(t)$ in \ttg{eq:exD-QSSA_M} and the delay $\tautr$ in \ttg{eq:Monk_M} differs. The delay $\tau(t)$ compensates for the time needed to bind (or unbind) $n$ molecules of $P$ to the promoter region of \textsl{hes1} gene, while $\tautr$ corresponds to the transcriptional delay. Therefore, it is not biologically plausible to identify these two delays. From a biological viewpoint, these delays should be summed up. However, the D-QSSA methodology can be used to derive the transcriptional delay rigorously, but this would require a more detailed mass action model of transcription, for example the model from \cite{RouZhu:2006b,RouZhu:2006}.

% Let us note that this system was equipped in \cite{Monk:2003} by a transcriptional delay to show that the oscillatory behaviour of Hes1 expression is driven by
% the transcriptional delay. However, to illustrate the accuracy of the D-QSSA, we consider
% no transcriptional delay in system \ttg{eq:exQSSA_M}--\ttg{eq:exQSSA_P}.
% Further notice that function \ttg{eq:exQSSA_D} comming from the quasi-steady state assumption is exactly the Hill function used in \cite{Monk:2003} and \cite{Hirata:2002}.

\subsection{Numerical results}

We will numerically solve and compare system
\ttg{eq:exODE_d}--\ttg{eq:exODE_p}
with its QSSA approximation \ttg{eq:exQSSA_M}--\ttg{eq:exQSSA_P} and
its D-QSSA approximation \ttg{eq:exD-QSSA_M}--\ttg{eq:exD-QSSA_P}.
We consider parameter values from \cite{Monk:2003}. However this reference provides
values for $n$, $\mum$, $\mup$ and not for $\gamma$, $\gamma_{-1}$,
and $\alpha$, because these are irrelevant for system \ttg{eq:Monk_M}--\ttg{eq:Monk_P}. Instead, reference \cite{Monk:2003} provides
the value $p_0 = 100$. Since $p_0^n = \gamma_{-1}/\gamma$, we choose
values of $\gamma$ and $\gamma_{-1}$ to agree with this relation.
In particular, we consider
\begin{equation}
\label{eq:param1}
  n=5,\ 
  \mum = \mup = 0.03, \ %$\,min$^{-1}$
  \gamma = 2\cdot10^{-12},\  %0.625,\ %2\cdot10^{-14},\  %$\,$\mu$m$^{3n}$min$^{-1}$,
  \gamma_{-1} = 0.02,\ %2\cdot10^{-4},\ %$\,min$^{-1}$,
  %\alphap = 50,\  %$\,min$^{-1}$,
  %\alpham = 10. %$\,min$^{-1}$.
  \alpha = 500.
\end{equation}

The systems of ODEs \ttg{eq:exODE_d}--\ttg{eq:exODE_p}
and \ttg{eq:exQSSA_D}--\ttg{eq:exQSSA_P} can be solved by practically any
ODE solver.
We use the Matlab \texttt{ode} solver.
Numerical solution of a system with delays is straightforward.
For simplicity, we use the explicit Euler method with a sufficiently small time step.
In every time step, we compute the
delay $\tau(t)$ given by \ttg{eq:exD-QSSA_D} and use the corresponding historical value $\wtp(t-\tau(t))$ to evaluate the right-hand side of
\ttg{eq:exD-QSSA_M}--\ttg{eq:exD-QSSA_P}. Therefore, values of $\wtp$ have to be stored in every time step.

\begin{figure}
\includegraphics[width=0.48\textwidth]{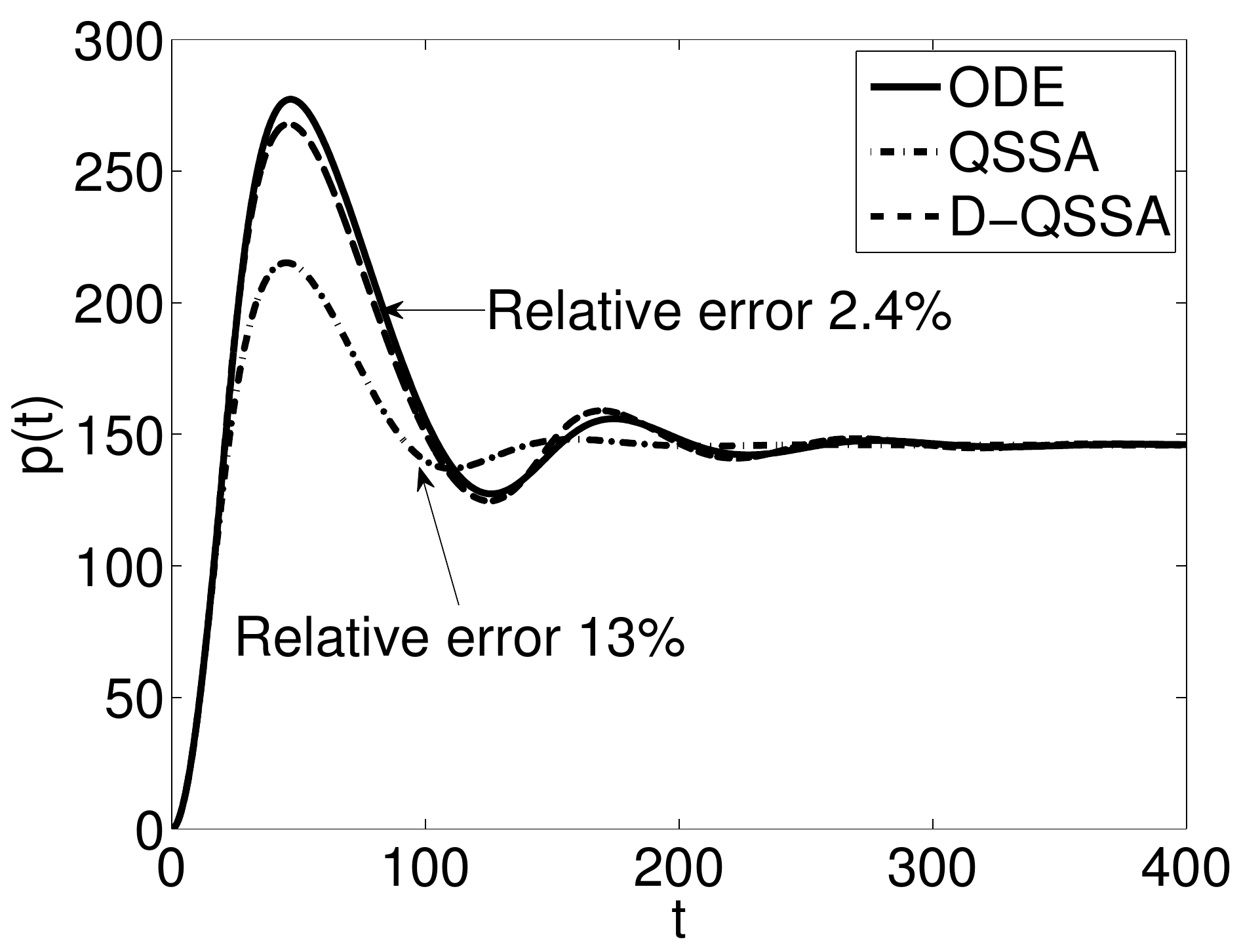}\quad
\includegraphics[width=0.48\textwidth]{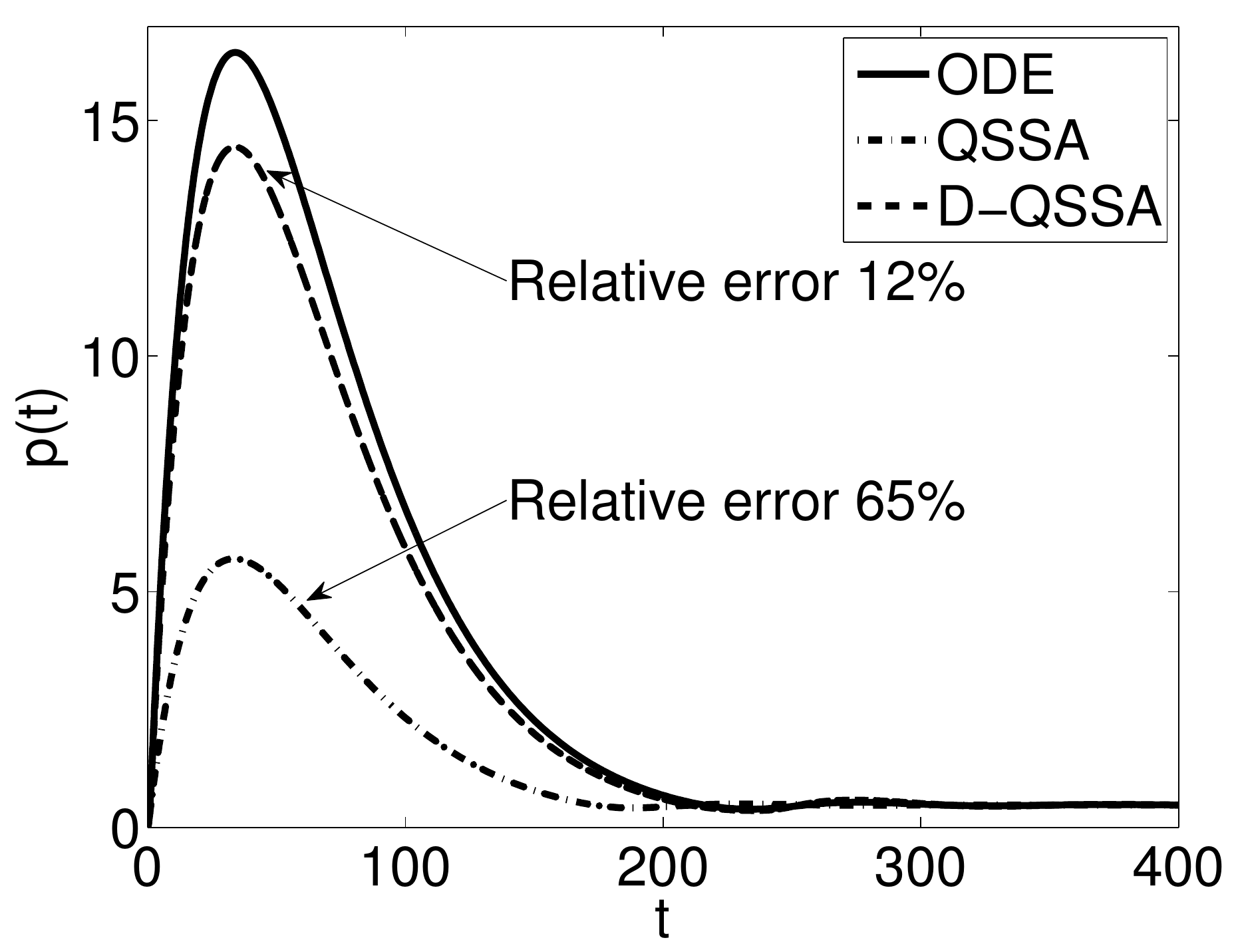}
\caption{\label{fi:exHes1}
Numerical solutions $p(t)$, $\whp(t)$, and $\wtp(t)$ of
ODE system \ttg{eq:exODE_d}--\ttg{eq:exODE_p},
QSSA system \ttg{eq:exQSSA_D}--\ttg{eq:exQSSA_P}, and
D-QSSA system \ttg{eq:exD-QSSA_D}--\ttg{eq:exD-QSSA_P}.
Left panel corresponds to parameter values \ttg{eq:param1}, the right panel to \ttg{eq:param2}.
}
\end{figure}

Figure~\ref{fi:exHes1} (left) presents numerically computed values of $p(t)$,
$\whp(t)$, and $\wtp(t)$ for $t\in[0,T]$ as given by
\ttg{eq:exODE_d}--\ttg{eq:exODE_p},
\ttg{eq:exQSSA_D}--\ttg{eq:exQSSA_P}, and
\ttg{eq:exD-QSSA_D}--\ttg{eq:exD-QSSA_P}, respectively.
We observe that the approximation provided by the D-QSSA is much more accurate
than the standard quasi-steady state approximation. Quantitatively, the $L^2$ relative error of the quasi-steady state solution $\whp$ is approximately 13\,\%,
while the relative error of the D-QSSA solution is approximately 2.4\,\%.
To be rigorous,
$$
  \frac{\| p - \whp\|_{L^2(0,T)}}{\| p \|_{L^2(0,T)}} \doteq 0.13,
  \quad\textrm{and}\quad
  \frac{\| p - \wtp\|_{L^2(0,T)}}{\| p \|_{L^2(0,T)}} \doteq 0.024.
$$

To show the quality of the QSSA and D-QSSA approximations also in different
parameter regimes, we change the values of parameters $\gamma$ and $\gamma_{-1}$ to
\begin{equation}
\label{eq:param2}
  \gamma = 10 %6.25\cdot10^{-12} %2\cdot10^{-25}  %$\,$\mu$m$^{3n}$min$^{-1}$,
  \quad\textrm{and}\quad
  \gamma_{-1} = 10^{-4} %0.02, %$\,min$^{-1}$,
  %gamma = 5e-25 %1e0*20/(50^n+1000^n) %0.001;
%gammap = 0.02 %1e0*gamma*100^n %1;
\end{equation}
and keep the other parameters the same as in \ttg{eq:param1}.
This new choice of $\gamma$ and $\gamma_{-1}$ corresponds to
$p_0 = (\gamma_{-1}/\gamma)^{1/n} = 10$.
Figure~\ref{fi:exHes1} (right) shows the numerical solutions for parameter values
\ttg{eq:param2}. In this case the relative error of the quasi-steady state solution
is approximately 65\,\% and the error of the D-QSSA is approximately 12\,\%.

For completeness, Table~\ref{ta:relerr} presents relative errors of
all three variables in the system for both sets of parameters \ttg{eq:param1} and \ttg{eq:param2}.
% We observe that the error in variable $m$ is always close to the error in $p$,
% while the errors in $D$ differ. This is probably caused by substantially smaller magnitudes of $D$ with respect to $m$ and $p$. ???? NOT TRUE ANY MORE ??? Indeed, the steady state values
% are
% $D\approx 7.6\cdot10^{-4}$, $M\approx 0.25$ ???, and $P\approx420$ ???
% for parameter values \ttg{eq:param1}, and
% $D\approx 0.11$, $M\approx 36$ ???, and $P\approx6.1\cdot10^{4}$ ???
% for parameters \ttg{eq:param2}.
This table confirms that the D-QSSA provides a considerably more accurate approximation
than the QSSA for all variables in the system.
\begin{table}\centerline{%
\begin{tabular}{l|ccc|ccc}
   & \multicolumn{3}{c|}{Parameters \ttg{eq:param1}}
   & \multicolumn{3}{c}{Parameters \ttg{eq:param2}}
\\
   & $D$ & $m$ & $p$ & $D$ & $m$ & $p$
\\ \hline
% rel. err. QSSA   & 74\,\%  & 62\,\%  &  62\,\% & 33\,\%  & 20\,\%  &  15\,\% \\
% rel. err. D-QSSA & 28\,\%  & 12\,\%  &  12\,\% & 12\,\%  & 3.6\,\%  &  2.5\,\%
%
relative error QSSA   & 32\,\%  & 18\,\%  &  13\,\% & 77\,\%  & 65\,\%  &  65\,\% \\
relative error D-QSSA & 12\,\%  & 3.6\,\%  &  2.4\,\% & 28\,\%  & 12\,\%  &  12\,\%
\end{tabular}%
}
\caption{\label{ta:relerr}
Relative errors of QSSA and D-QSSA for all variables of the system and for both sets of parameters.}
\end{table}

The \emph{a priori} error estimate \ttg{eq:errest} provides a guaranteed bound on the error. It is accurate if the function $g(t)$ is close to constant and bounded well away from zero. However, in general it can overestimate the true error considerably, especially if the function $g(t)$ varies significantly or if it's value is close to zero.
%It can be used to assess the quality of the approximation even if the solution of the original system is not accessible.
The current example has both of these unfavourable properties. The quantities in \ttg{eq:errest} are $\varepsilon = \gamma_{-1}$, $M = \gamma_{-1} + \gamma \max_{[0,T]} |p|^n$, $f'=f'' = 0$, and $x^0 = D(0) = 1$.
Using, for illustration, parameter values \ttg{eq:param1} and estimate $\tilde p(t) \leq 300$,  %$12.5\cdot 10^4 / 500$,
the error bound \ttg{eq:errest} yields
$$
  | D(t) - \widetilde D(t) | \leq 1.99 + 2\exp(-0.02 t),
$$
while the true error is at most 0.32 and it does not exceed 0.05 for $t>50$. Nevertheless, this shows that the D-QSSA can provide accurate results even if the error estimate \ttg{eq:errest} does not predict so.

% Thus, this simple estimate guarantees the absolute error of the approximation $\widetilde D(t)$ to be below
% 0.04 for $t=200$ and below $7\cdot 10^{-4}$ for $t=400$. ??? CHECK

We have also tested the influence of the last term in equation \ttg{eq:exD-QSSA_P}.\footnote{We would like to thank the anonymous referee for suggesting this test.} The reason is that removing this term corresponds to the situation when we first reduce the system by the standard QSSA and then add the delay. 
We have found that the influence of this term on the solution is very minor. Solutions with and without this term almost coincide. This is expected if the delay is small, because then the quantity evaluated at the current time and at the delayed time differ only slightly and the term resulting from the time delay is close to zero and be neglected. This means that the application of the D-QSSA to general systems of the form described in Section~\ref{se:massaction} can be in these cases simplified. The system can be reduced by the standard QSSA first and then the delays can be added.

Parameter values \ttg{eq:param1} and \ttg{eq:param2} have been chosen to show
the potential of the D-QSSA technique. In general, if we increase $\gamma_{-1}$
and correspondingly decrease $\gamma$ to keep the ratio $\gamma_{-1}/\gamma = p_0^n$
constant, then the delay $\tau(t)$ given in \ttg{eq:exD-QSSA_D} attains smaller values,
because $\tau(t) \leq 1/\gamma_{-1}$.
Eventually, it is so small that the D-QSSA and QSSA approximations practically
coincide. This is the situation when the two time scales in the system are
well separated and both approaches provide very accurate approximation.
However, in numerical tests we performed in this regime,
the D-QSSA was always more accurate than the QSSA.
On the other hand, if we decrease $\gamma_{-1}$ and increase $\gamma$
to keep the ratio $\gamma_{-1}/\gamma = p_0^n$ constant, then
the delay $\tau(t)$ attains high values and the D-QSSA
becomes very inaccurate. However, this is the situation when
the time scales are not well separated and the standard QSSA is not accurate
either.
This behaviour of the D-QSSA is well explained by the dependence of
the error estimate \ttg{eq:errest} on $\varepsilon$.
%Recall that $\varepsilon = \gamma_{-1}$ in this example.

Finally, we note that all these results were achieved with the delay as defined in \ttg{eq:exD-QSSA_D}.
This delay is state dependent, because it depends on the value $\wtp(t)$.
This might be a hindrance and therefore we have tried to replace the state
dependent delay by a constant delay.
Based on experimental fitting to the system \ttg{eq:exODE_D}--\ttg{eq:exODE_P},
we have found that even a constant delay can provide very accurate results.
Choosing $\tau(t)=6$ for parameter set \ttg{eq:param1} and $\tau(t)=0.89$
for parameters \ttg{eq:param2}, we achieve relative errors in $p$
of approximately 1\,\%. However, the accuracy of these approximations is quite
sensitive to the value of the delay and so far it is not clear how to determine
the optimal constant value of the delay.

\section{Cell cycle model}
\label{se:cellcyc}

The D-QSSA approach seems to be especially suitable for oscillating systems.
In this section, we provide an example of the application of the D-QSSA to a cell-cycle model and we will see that the standard QSSA destroys the oscillations, but the reduction by the D-QSSA keeps them. In addition, we use this example to show the application of the D-QSSA in a recurrent manner as was mentioned in Section~\ref{se:massaction}. 
Further, we will use this example to illustrate the potential of simplifying the time varying delays to a constant delays and compare various possible choices of the constant delay.

\subsection{System reduction}

We consider the cell cycle model from \cite{FerTsaYan:2011}:
\begin{align}
\nonumber
\ddt C(t) &= \alpha_1 - \beta_1 C(t) \frac{A^{n_1}(t)}{K_1^{n_1}+A^{n_1}(t)},
\\  \label{eq:ccorig}
\ddt P(t) &= \alpha_2 (1-P(t)) \frac{C^{n_2}(t)}{K_2^{n_2}+C^{n_2}(t)} - \beta_2 P(t),
\\  \nonumber
\ddt A(t) &= \alpha_3 (1-A(t)) \frac{P^{n_3}(t)}{K_3^{n_3}+P^{n_3}(t)} - \beta_3 A(t)
\end{align}
with parameter values
% \begin{alignat*}{4}
% \alpha_1 &= 0.1, &\ \beta_1 &= 3, &\ K_1 &= 0.5, &\ n_1 &= 8, \\
% \alpha_2 &= 3,   &\ \beta_2 &= 1, &\ K_2 &= 0.5, &\ n_2 &= 8, \\
% \alpha_3 &= 3,   &\ \beta_3 &= 1, &\ K_3 &= 0.5, &\ n_3 &= 8.
% \end{alignat*}
%
$
\alpha_1 = 0.1, \  \alpha_2 = \alpha_3 = 3, \
\beta_1 = 3, \ \beta_2 = \beta_3 = 1, \
K_1 = K_2 = K_3 = 0.5, \
n_1 = n_2 = n_3 = 8.
$
In this model, $C$, $P$, and $A$ correspond to
the cyclin-dependent protein kinase (CDK1),
the polo-like kinase 1 (Plk1), and
the anaphase-promoting complex (APC), respectively.
We solve this system numerically, using the zero initial condition. 
Figure~\ref{fi:cellcyc} shows the solution component $C(t)$ as the solid line.
Clearly, the system exhibits sustained oscillations.

All three variables in system \ttg{eq:ccorig} vary on a comparable
timescale and, thus, a straightforward application of the QSSA to one of
these variables can hardly be successful. However, just for further
comparison, we first reduce the system by the standard QSSA for $P$:
\begin{align}
\nonumber
\ddt C(t) &= \alpha_1 - \beta_1 C(t) \frac{A^{n_1}(t)}{K_1^{n_1}+A^{n_1}(t)},
\\  \label{eq:ccQSSA}
\ddt A(t) &= \alpha_3 (1-A(t)) \frac{\Ps^{n_3}(t)}{K_3^{n_3}+\Ps^{n_3}(t)} - \beta_3 A(t),
\\ \nonumber
\Ps(t) %&=\frac{\alpha_2 \frac{C^{n_2}}{K_2^{n_2}+C^{n_2}} }{\beta_2 + \alpha_2 \frac{C^{n_2}}{K_2^{n_2}+C^{n_2}} }
  %= \frac{1}{1 + \frac{\beta_2}{\alpha_2} \left(1 + \frac{K_2^{n_2}}{C^{n_2}} \right) }
  &= \left[ 1 + \frac{\beta_2}{\alpha_2} \left(1 + \frac{K_2^{n_2}}{C^{n_2}(t)} \right) \right]^{-1}.
%   aux = alpha2 * C.^n2 ./ (K2^n2+C.^n2);
%   P = aux/(aux + beta2);
%   rC = alpha1 - beta1*C .* A^n1 ./ (K1^n1+A.^n1);
%   rA = alpha3*(1-A) .* P.^n3 ./ (K3^n3+P.^n3) - beta3*A;
\end{align}
The solution component $C(t)$ of this reduced system is presented in Figure~\ref{fi:cellcyc} by the dotted line. The QSSA for $P$ destroyed the oscillations.

Similarly, we now apply the D-QSSA to $P$. The ODE for $P$ in \ttg{eq:ccorig},
can be easily expressed in the form \ttg{eq:ode1} and the D-QSSA can be formally applied. We obtain a system of two differential equations with a delay:
\begin{align}
\nonumber
\ddt C(t) &= \alpha_1 - \beta_1 C(t) \frac{A^{n_1}(t)}{K_1^{n_1}+A^{n_1}(t)},
\\  \label{eq:ccDQSSA}
\ddt A(t) &= \alpha_3 (1-A(t)) \frac{\Psd^{n_3}(t)}{K_3^{n_3}+\Psd^{n_3}(t)} - \beta_3 A(t),
\\ \nonumber
\Psd(t) &= \left[ 1 + \frac{\beta_2}{\alpha_2} \left(1 + \frac{K_2^{n_2}}{C^{n_2}(t-\tau)} \right) \right]^{-1},
\quad \tau = \left[\alpha_2 \frac{C^{n_2}(t)}{K_2^{n_2}+C^{n_2}(t)} + \beta_2 \right]^{-1}.
\end{align}
As the initial history we extend the zero values for $t=0$ to all $t<0$
and solve this delay system numerically. 
The dashed line in Figure~\ref{fi:cellcyc} shows the solution component $C(t)$. In this case, we do observe sustained oscillations, although their period and amplitude do not accurately correspond to the original system \ttg{eq:ccorig}.

Finally, we will illustrate the capabilities of the D-QSSA approach and reduce the system even more. This is an example of the recurrent application of the D-QSSA, as was mentioned in Section~\ref{se:massaction}.
We simply consider system \ttg{eq:ccDQSSA}, where the variable $P$ is approximated by the D-QSSA, and apply the D-QSSA to $A$. The delay differential equation for $A$ in \ttg{eq:ccDQSSA} can be formally written in the form \ttg{eq:ode1} and the D-QSSA can be formally used. 
The result is a single differential equation with two delays:
\begin{align}
\nonumber
\ddt C(t) &= \alpha_1 - \beta_1 C(t) \frac{\Asd^{n_1}(t)}{K_1^{n_1}+\Asd^{n_1}(t)},
\\ \label{eq:ccDQSSA2}
\Asd(t) &= \left[ 1 + \frac{\beta_3}{\alpha_3} \left(1 + \frac{K_3^{n_3}}{\Psd^{n_3}(t-\tau_2)} \right) \right]^{-1}, \quad
\tau_2 = \left[\alpha_3 \frac{\Psd^{n_3}(t)}{K_3^{n_3}+\Psd^{n_3}(t)} + \beta_3 \right]^{-1},
\\ \nonumber
\Psd(t) &= \left[ 1 + \frac{\beta_2}{\alpha_2} \left(1 + \frac{K_2^{n_2}}{C^{n_2}(t-\tau_1)} \right) \right]^{-1}, \quad
\tau_1 = \left[\alpha_2 \frac{C^{n_2}(t)}{K_2^{n_2}+C^{n_2}(t)} + \beta_2 \right]^{-1}.
\end{align}
We initialize the history in the same way as above and solve this delay equation numerically. The dash-dot line in Figure~\ref{fi:cellcyc} shows the solution.
Clearly, even the recurrent application of the D-QSSA yields a qualitatively correct approximation, which still oscillates.

% Note that the D-QSSA approximations in Figure~\ref{fi:cellcyc} predict the maximal
% concentrations correctly, but they are inaccurate for the minimal concentrations. 
% This behaviour corresponds to the small values of the time varying delay when the 
% concentration of $C$ is high and consequently to higher expected accuracy of the D-QSSA.

\begin{figure}
\includegraphics[width=0.8\textwidth]{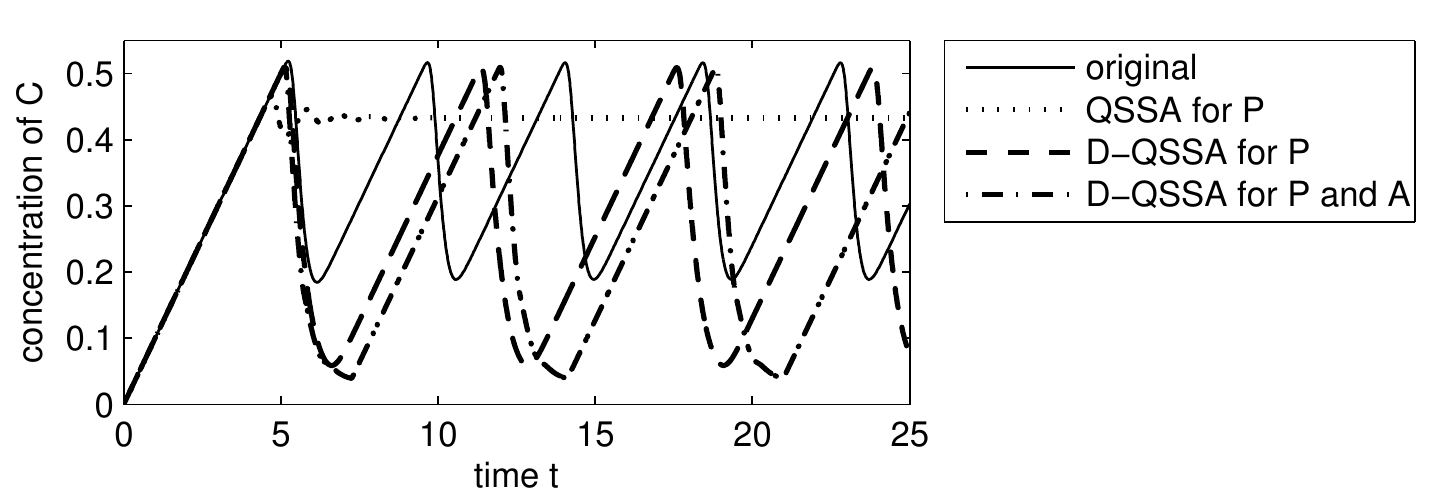}
\caption{\label{fi:cellcyc}
Solution of the original system \ttg{eq:ccorig} (solid line), system reduced by the standard QSSA for $P$ \ttg{eq:ccQSSA} (dotted line),
system reduced by the D-QSSA for $P$ \ttg{eq:ccDQSSA} (dashed line) and system reduced by the D-QSSA for both $P$ and $A$ \ttg{eq:ccDQSSA2} (dash-dot line).
}
\end{figure}

\subsection{Effects of constant delays}

As we have already mentioned, the time varying delays can be replaced by constant delays. If the constant delay is suitably chosen, the quality 
of the approximation can even increase. However, a suitable value
for the delay is difficult to find. Here, we derive several possibilities 
for choosing the constant delay based on knowledge of 
the solution of the original system \ttg{eq:ccorig}.
Moreover, the quality of the resulting approximations has to be assessed
using again the solution of the original problem. It is not clear how
to find a suitable value of the constant delay a priori, i.e. without
knowledge of the original solution.

All equations in system \ttg{eq:ccorig} can be formally written in the form
\ttg{eq:ode1}. For a given equation, the delay is formally given as $\tau(t) = 1/g(t)$, where $g(t)$ depends on the other components of the system. 
Thus, if we have the solution of this system, we know the time evolution of the time
varying delay. From this evolution, we can derive constant delays in several ways. Natural choices are the minima, maxima and mean values of $\tau(t)$
over the range of $t$.

Figure~\ref{fi:cellcyc_tau} shows the results for the system of two differential equations \ttg{eq:ccDQSSA} with a delay. We can compare the solution of the original system with the reduced systems, where the constant delay is chosen as the minimal, mean and maximal value of $\tau(t)$. The values are approximately 
$0.37$, $0.73$, and $1.00$, respectively.
We observe that the differences among these three possibilities are relatively high
and the minimal delay yields the most accurate results out of these three options.
However, choosing an ad hoc value 0.3 for the delay yields even better agreement with
the solution of the original problem. 

Figure~\ref{fi:cellcyc_tau2} shows similar results for the differential equation
\ttg{eq:ccDQSSA2} with two delays. We choose both delays as the minimal, mean, and maximal values. The delay $\tau_1$ applies to the variable $P$ and as such it has the same values as above, i.e. $0.37$, $0.73$, and $1.00$, respectively. 
The values of $\tau_2$ are approximately $0.32$, $0.72$, and $1.00$.
Again, the minima provide the best results out of these three options. 
However, the ad hoc values $\tau_1 = 0.3$ and $\tau_2 = 0.28$ yield 
an even better approximation.

\begin{figure}
\includegraphics[width=0.8\textwidth]{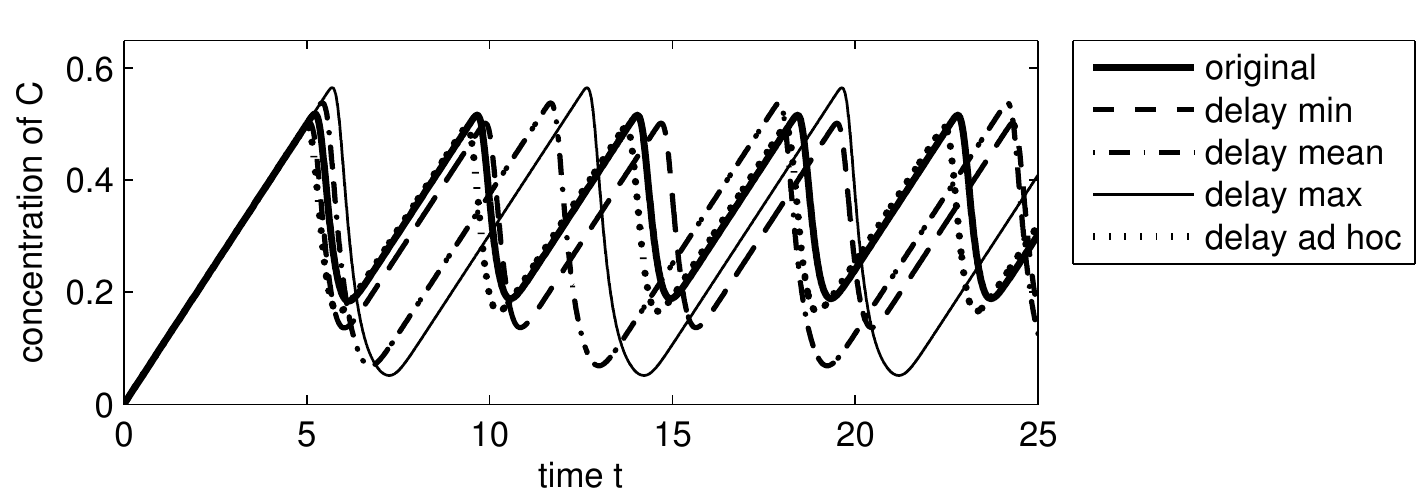}
\caption{\label{fi:cellcyc_tau}
The thick solid line indicates the solution of the original system \ttg{eq:ccorig}.
The remaining lines correspond to the system \ttg{eq:ccDQSSA} of two differential equations with various constant values of the delay:
minimal (dashed), mean (dash-dot), maximal (thin solid), and ad hoc value (dotted).
}
\end{figure}

\begin{figure}
\includegraphics[width=0.8\textwidth]{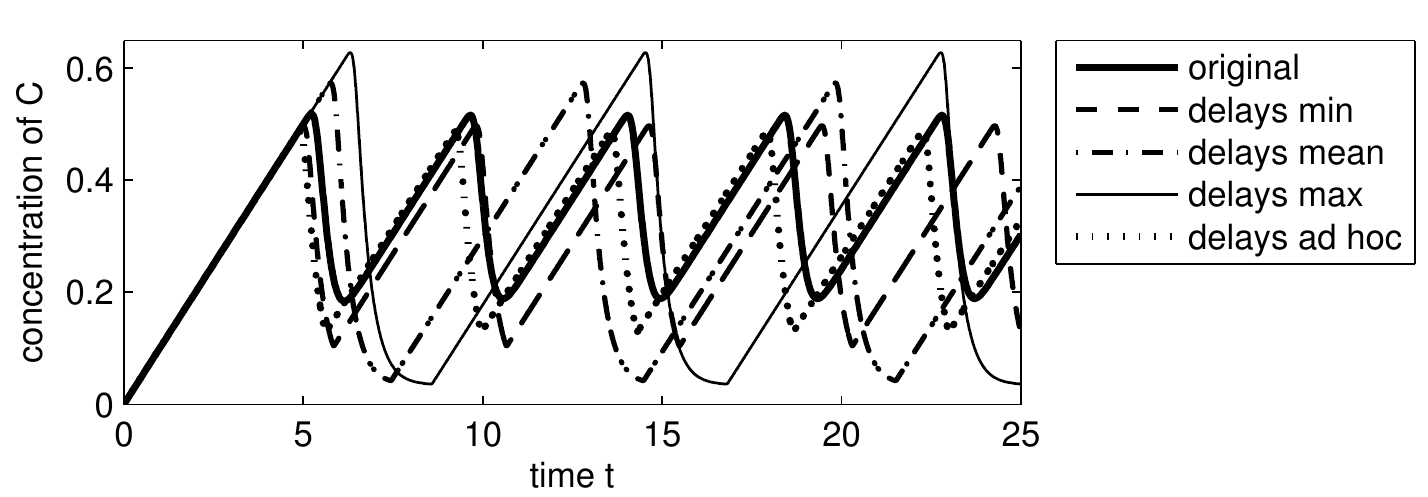}
\caption{\label{fi:cellcyc_tau2}
The thick solid line indicates the solution of the original system \ttg{eq:ccorig}.
The remaining lines correspond to the differential equation \ttg{eq:ccDQSSA2} with various constant values of the two delays. Both these delays are chosen as
the minimal (dashed), the mean (dash-dot), the maximal (thin solid), and the ad hoc value (dotted).
}
\end{figure}

In order to quantify the accuracy of various choices of the delays we present
Table~\ref{ta:ccerr}, where we provide relative errors in the period and amplitude of
the quantity $C$ with respect to the solution of the original system \ttg{eq:ccorig}.
The relative errors are expressed in percent and we observe that the choice of
the delay has a significant influence on the accuracy. 
In this case, the automatic choice of the time-dependent delay yields roughly
as accurate results as the mean delay computed from the solution of the original system.
However, more accurate results are obtained by choosing the minimal delay, but even this choice is far from optimal, as we can see from the ad hoc choice of the delay.

\begin{table}
\begin{tabular}{c|ccccc}
Error in period/amplitude (\%) & $\tau(t)$ & $\min \tau(t)$ & $\operatorname{mean} \tau(t)$ & $\max \tau(t)$ & ad hoc 
\\ \hline
two delay equations \ttg{eq:ccDQSSA} & 42/37 & 10/11 & 42/43 & 59/57 & 0.5/0.3
\\
one delay equation \ttg{eq:ccDQSSA2} & 57/43 & 10/20 & 60/62 & 88/81 & 0.3/10
\end{tabular}
\caption{Relative errors in the period and amplitude of $C$ expressed in percent
and computed with respect to the solution of the original system \ttg{eq:ccorig}.
Columns correspond to the time-dependent delay \ttg{eq:D-QSSA}, minimal, mean, maximal, and ad hoc values of the delays, respectively.
}
\label{ta:ccerr}
\end{table}

% Periods and amplitudes:
%  original ODE:                       perA=4.37973     ampA=0.32767
%  D-QSSA for P:                       perE=6.23157     ampE=0.449931
%  D-QSSA for A and P:                 perG=6.86446     ampG=0.469703
%  D-QSSA for P-const delays:       perH(1)=4.35579  ampH(1)=0.328782
%  D-QSSA for P-const delays:       perH(2)=4.80644  ampH(2)=0.365111
%  D-QSSA for P-const delays:       perH(3)=6.96731  ampH(3)=0.513715
%  D-QSSA for P-const delays:       perH(4)=6.227    ampH(4)=0.468123
%  D-QSSA for A and P-const delays: perI(1)=4.3908   ampI(1)=0.360865
%  D-QSSA for A and P-const delays: perI(2)=4.81779  ampI(2)=0.393002
%  D-QSSA for A and P-const delays: perI(3)=8.22942  ampI(3)=0.591837
%  D-QSSA for A and P-const delays: perI(4)=7.00431  ampI(4)=0.529623
%

\section{Discussion and conclusions}
\label{se:concl}

In this paper, we present the technique of D-QSSA,
prove a corresponding error estimate, and analyse how it approximates 
the invariant manifold.
The technique is well justified for equations of the form \ttg{eq:ode1}.
We also show explicit application of the D-QSSA to a general mass action model of a general chemical system. However, the D-QSSA can be applied only if assumptions A1--A3 are satisfied.
Assumption A1 means that the stoichiometric coefficients of the fast chemical
species on the reactant side of the chemical equation cannot be greater than one, unless the species is a catalyst.
This is not as severe a restriction as it may first look, because many
biochemical reactions are of this type and, moreover, the assumption concerns fast
species only.

Assumption A2 requires positivity of functions $g_j(t)$ in the equations for the fast species. This is a technical assumption whose validity can be guaranteed from the stoichiometry and rate constants of the chemical system.
%by sufficient conditions $\mathrm{A2}'$ and $\mathrm{A2}''$.

% Given assumption A1, assumption A2 states that every fast species is consumed
% in at least one reaction. This is quite a natural assumption, because if this was
% not the case then the species would be either a catalyst or it would only be
% produced from the other species. Thus, such a species would not be involved
% in the system kinetics and its dynamics could be computed independently from
% the dynamics of the other species.

% Assumption A3 means that all fast species in all reactions, where they react,
% either work as catalysts or are consumed. This assumption is just a sufficient
% condition for the positivity of the corresponding function $g_j(t)$.
% This, however, can be positive even if assumption A3 is not satisfied.

Assumption A3 means that the fast variables are independent in the sense
that reactants of all reactions, where a fast species is either produced or
consumed, do not involve any other fast species. This tends to be satisfied
in biochemical systems if the fast species correspond to genes, because
genes do not directly influence each other. Moreover, this assumption is
not fundamental, due to the possibility of recurrent application of
D-QSSA (see Section~\ref{se:massaction} and the example in Section~\ref{se:cellcyc}).

To summarise, these assumptions are not too
restrictive and make the technique of D-QSSA applicable to the majority
of mass action models of biochemical systems.

The main idea of the D-QSSA is the introduction of a time delay
to improve accuracy.
The standard QSSA ignores the time needed by fast variables to reach their
steady states. This may result in considerable errors.
The D-QSSA compensates for this time delay and improves the accuracy of
the approximation. In the example presented in Section~\ref{se:numex},
the D-QSSA exhibits more than five times smaller error
in comparison with the standard QSSA.

In comparison with the standard QSSA, the D-QSSA seems to be especially useful
for oscillating systems. The standard QSSA usually causes considerable errors
in both the period and amplitude of oscillations \cite{Vilar:2002}.
The D-QSSA enables this error to be reduced substantially.
In \cite{Vej:MB} we have successfully applied the technique of D-QSSA
to a system for modelling circadian rhythms \cite{Vilar:2002}.
This system is described by nine
ODEs and it can be reduced to two. The
standard QSSA yields roughly 30\,\% error in the
period, while the D-QSSA approach approximates the period with errors of about
1--2\,\%.

The D-QSSA can be applied even in cases when the standard QSSA 
destroys the oscillations completely. 
In Section~\ref{se:cellcyc}, we provide an example of such a successful application
of the D-QSSA. The original cell-cycle model consists of three ODEs and it has been reduced to one delay differential equation which yields sustained oscillations.
This indicates that the oscillations in this cell-cycle model are driven by 
delayed negative feedback.

The D-QSSA and the proposed choice of the delays are in agreement with 
the conclusions of \cite{RouRou:2001} where systems with constant coefficient $g$ are considered. The delays derived in \cite{RouRou:2001}, by a careful analysis of two specific examples, are identical to the delays obtained by the D-QSSA. 

% The quasi steady states of quantities with fast dynamics
% slowly change, because the values of the other (slow) variables change.
% The standard QSSA approach keeps the fast variables in their steady state, i.e.,
% if their steady state changes, they jump immediately to the new value. However,
% originally the variable tends to its changed steady state with a finite speed
% and, hence, it takes certain time to reach it. The D-QSSA compensates for this
% time by introducing the delay. As a result, the standard QSSA may produce
% considerable errors both in the period and amplitude due to its to quick
% transition to the steady state. The D-QSSA respects the natural dynamics of the
% system and, thus, reduces these errors.

The case of varying delays may
be problematic for both numerical solution and subsequent analysis.
However, numerical experiments both in this paper and in \cite{Vej:MB}
show that a constant approximation of the varying delay 
yields accurate results as well. Nevertheless, the quality of
this approximation is quite sensitive to the value of the constant delay and its optimal
choice is not clear. To make the constant delay approach practical, subsequent
research is necessary.

The essential first step for both the standard QSSA and the D-QSSA is the
identification of the fast variables. However, in some systems none of the
variables can be considered as fast, while a suitable combination can.
Reference \cite{LeeOth:2010} shows how to identify such combinations and how to
apply the QSSA to these variables. The technique of the D-QSSA can be applied
in these cases as well and we will investigate this possibility in future
research.

In chemical systems, the delay in the D-QSSA depends also on the rate constants
of the chemical reactions involved. Thus, the technique of the D-QSSA can be used in
situations where complex chemical processes are modelled by a simple reaction
with a time delay \cite{Bernard:2006,Monk:2003,RouZhu:2006},
to determine and analyse how the delay
actually depends on various parameters of the system. This promises
new insight and understanding of models with time delays. We plan, in future work,
to apply the D-QSSA to a detailed chemical model of transcription \cite{RouZhu:2006b,RouZhu:2006} to
derive and analyse the dependence of the transcriptional delay on the rates
of the elementary chemical reactions which comprise the process.

\section*{Acknowledgements}
We would like to thank the anonymous referees for their very helpful comments 
which led to substantial improvement of the original manuscript.

The research leading to these results has received funding from the People Programme (Marie Curie Actions) of the European Union's Seventh Framework Programme (FP7/2007-2013) under REA grant agreement no. 328008
and
from the European Research Council under the {European
Community's} Seventh Framework Programme {(FP7/2007-2013)} /
ERC {grant agreement} No. 239870.
Tom\'a\v{s} Vejchodsk\'y gratefully acknowledges the support of RVO 67985840.
Radek Erban would like
to thank the Royal Society for a University Research
Fellowship; Brasenose College, University of Oxford, for a Nicholas
Kurti Junior Fellowship; and the Leverhulme Trust for a Philip
Leverhulme Prize.

% \begin{equation}
% \label{}
% \end{equation}

% \begin{lemma}
% \end{lemma}
% \begin{proof}
% \end{proof}

% \bibliographystyle{panm}
% %\bibliographystyleCa{myunsrt} %{abbrv}
% \bibliography{%
% /home/vejchod/tex/bib/vejchod,%
% /home/vejchod/tex/bib/vejchod_aee%
% }

\bibliography{DQSSA}

\end{document}